\newcommand\norm[1]{\left\lVert#1\right\rVert}
\newcommand\scp[1]{\big\langle#1\big\rangle}
\newcommand*\bigcdot{\mathpalette\bigcdot@{.5}}
\newcommand*\bigcdot@[2]{\mathbin{\vcenter{\hbox{\scalebox{#2}{$\m@th#1\bullet$}}}}}
\newcommand*\vp{\pmb y}
\newcommand*\vz{\pmb z}
\newcommand*\al{\pmb \alpha}
\newcommand*\eps{\pmb \epsilon}
\newcommand*\gam{\pmb \gamma}
\newcommand*\psipmb{\pmb \psi}
\newcommand*\vpmb{\pmb{v}}
\newcommand*\epmb{\pmb{e}}
\newcommand*\mbbc{\mathbb{C}}
\newcommand*\mbbn{\mathbb{N}}
\newcommand*\Ltwo{L^2(0,T;\mathbb{R}^M)}
\newcommand*\Bstar{B^{\star}}
\newcommand*\alstar{\pmb \alpha^{\star}}
\newcommand*\mustar{\mu^{\star}}
\newcommand*\kertxt{\textnormal{ker}}
\newcommand*\spantxt{\textnormal{span}}
\newcommand*\obsmat{\mathcal{O}_N(C,A)}
\definecolor{Gabriele}{rgb}{0, 0, 1}
\newcommand{\TheTitle}{Analysis of a greedy reconstruction algorithm} 
\title{{\TheTitle}}
\author{S. Buchwald\thanks{Universit\"at Konstanz, Germany ({\tt simon.buchwald@uni-konstanz.de}).}
	\and
G. Ciaramella\thanks{Universit\"at Konstanz, Germany ({\tt gabriele.ciaramella@uni-konstanz.de}).}
	\and 
	J. Salomon\thanks{INRIA Paris, France ({\tt julien.salomon@inria.fr}).}
}
\begin{document}
	
\maketitle
	
% REQUIRED
\begin{abstract}
A novel and detailed convergence analysis is presented for
a greedy algorithm that was introduced in \cite{madaysalomon} for operator
reconstruction problems in the field of quantum mechanics.
This algorithm is based on an offline/online decomposition of the reconstruction process
and on an ansatz for the unknown operator obtained by an a priori chosen set of linearly
independent matrices.
The presented convergence analysis focuses on linear-quadratic (optimization) 
problems governed by linear differential systems and reveals the strong dependence of the performance of the greedy algorithm 
on the observability properties of the system and on the ansatz of the basis elements. Moreover, the analysis allows us to use a precise (and in some sense optimal) choice of basis elements for the linear case and led to the introduction of a new and more robust optimized greedy reconstruction algorithm. 
This optimized approach also applies to nonlinear Hamiltonian reconstruction problems,
and its efficiency is demonstrated by numerical experiments.
\end{abstract}
	
% REQUIRED
\begin{keywords}
Hamiltonian identification, operator reconstruction, optimal control problems,
inverse problems, quantum control problems, greedy reconstruction algorithm.
\end{keywords}
	
% REQUIRED
\begin{AMS}
65K10, 81Q93, 34A55, 49N10, 49N45
\end{AMS}

%%%%%%%%%%%%%%%%%%%%%%%%%%%%%%%%%%%%%%%%%%%%%%%%%%%%%%%%%%%%%%%%%%%%%%%%%%%%%%%%%%%%%
\section{Introduction}\label{sec:intro}

The identification of Hamiltonian operators plays a fundamental role in the fields
of quantum physics and quantum chemistry; see, e.g., \cite{Donovan2014,Geremia2001,Geremia2003,Geremia2000,Wang,Xue2019,Zhang2014,Zhou2012,Zhu1999,Rabitz0}
and references therein. Even though the overall literature about Hamiltonian identification problems 
is quite extensive, the mathematical contribution to this area is rather limited.
Important mathematical theoretical contributions can be found in~\cite{Bonnabel2009,Badouin2008}
and in~\cite{lebris:ham_id,Fu_2017}, where uniqueness results for quantum inverse problems are proved by exploiting controllability arguments.
Other techniques, based on the so-called Carleman's estimate, are used in~\cite{Badouin2008}
to deduce uniqueness results for inverse problems governed by Schr\"odinger-type equations 
in presence of discontinuous coefficients. 
Excluding these few theoretical results, the literature rather focuses on numerical algorithms.

The term Hamiltonian identification often refers to two distinct problems. 
On the one hand, it sometimes indicates the inverse problem associated with the identification of 
a Hamiltonian operator obtained by a numerical fitting of simulated and given experimental data. 
On the other hand, it occasionally refers to both the problem of designing experimental parameters 
(allowing an optimized production of experimental data) and the subsequent inverse identification problem.
In general, the design of experimental parameters includes the computation of control functions allowing an efficient numerical solving of the inverse problem. 

In the latter problem, the algorithms proposed in the literature often combine 
the computation of control functions with the production of new 
synthetic (simulated) data or experimental data. Mathematically, this framework has given rise 
to two different approaches. 
The first one~\cite{lebris:ham_id} consists in a procedure that alternately updates a (shrinking) set of admissible Hamiltonian operators and the trial control field used to generate new data. 
The second approach~\cite{madaysalomon} is based on a full offline/online decomposition and 
is inspired by the greedy strategy emerged in the field of approximation theory in the 2000s; 
see, e.g., \cite{eim2004} and references therein. Even though some mathematical investigations of 
the first approach can be found in the literature (see \cite{lebris:ham_id,Fu_2017}),
much less is known about the second strategy, for which only preliminary numerical results were 
presented in \cite{madaysalomon}.

The goal of the present work is to provide a first detailed convergence analysis of the
Hamiltonian reconstruction strategy defined in \cite{madaysalomon}. As a by-product,
this analysis allows us to introduce a new more efficient and robust numerical
reconstruction algorithm.

The numerical strategy presented in \cite{madaysalomon} is based on the ansatz that
the unknown operator can be written as a linear combination of a priori given
linearly independent matrices. The set of these matrices is denoted by $\mathcal{B}_\mu$.
The reconstruction process is then decomposed in offline phase and online phase. 
In the offline phase, a family of control functions is built iteratively in a greedy manner
in order to maximize the distinguishability of the system.
This phase exploits only the quantum model, without any use of laboratory information.  
The algorithm proposed in \cite{madaysalomon} for the offline phase, that we call in this paper
greedy reconstruction (GR) algorithm, consists of a sweep over the elements of $\mathcal{B}_\mu$.
At every iteration of the GR algorithm, one new element of $\mathcal{B}_\mu$ is considered
and a new control function is computed with the goal of splitting
the states generated by the new element and the ones already considered in the previous iterations.
The computed control functions are experimentally implemented in the online phase to produce
laboratory data.
These are in turn used to define and solve an identification 
inverse problem, aiming at fitting the numerical simulations with the corresponding
experimental data. 

In \cite{madaysalomon} the heuristic motivation for the offline phase 
is that this attempts to produce a set of control functions that make
the online identification problem uniquely solvable (and easier to be solved)
in a neighborhood of the true solution. 
Starting from this idea we develop a detailed convergence analysis
for linear problems (linear-quadratic in the least-squares sense).
Our analysis relates very clearly the iterations of the offline phase, and the corresponding
computed control functions, to the solvability of the online identification problem.
Moreover, the obtained theoretical results will reveal
the strong dependence of the performance of the greedy reconstruction algorithm 
on the observability properties of the system and on the ansatz of the basis elements used to reconstruct 
the unknown operator. These observations allow us to improve the GR algorithm and introduce
a new optimized greedy reconstruction (OGR) algorithm which shows a very robust behavior
not only for the linear-quadratic reconstruction problems, but also for nonlinear Hamiltonian
reconstruction problems.

The paper is organized as follows.
In Section \ref{sec:notation}, the notation used throughout this paper is fixed.
Section \ref{sec:bilinear_setting} describes the Hamiltonian reconstruction problem
and the original GR algorithm introduced in \cite{madaysalomon}.
The GR algorithm is then adapted to linear-quadratic problems in Section \ref{sec:LQ_problem}
and the corresponding convergence analysis is presented in Section \ref{sec:conv}.
In Section \ref{sec:impro}, we introduce some improvements of the GR algorithm that lead
to an optimized greedy reconstruction algorithm. The OGR algorithm is presented first for
linear-quadratic problems and then extend to nonlinear Hamiltonian reconstruction problems.
Within Section \ref{sec:impro}, results of numerical experiments are shown to
demonstrate the efficiency and the improved robustness of the new proposed algorithm.
Finally, we present our conclusions in Section \ref{sec:conclusions}.

\section{Notation}\label{sec:notation}
Consider a positive natural number $N$. 
We denote by $\scp{{\bf v},{\bf w}}:=\overline{{\bf v}}^\top{\bf w}$, for any
${\bf v},{\bf w}\in \mathbb{C}^N$ the usual complex scalar product on $\mbbc^N$, and by
$\norm{\, \cdot \,}_2$ the corresponding norm. Further, $| \, z \, |$ is the modulus
of a complex number $z$ and $i$ is the imaginary unit. 
The space of symmetric matrices in $\mathbb{R}^{N \times N}$ is
denoted by ${\rm Sym}(N)$. For any $A \in \mathbb{R}^{N \times N}$, 
$[A]_{j,k}$ denotes the $j,k$ (with $j,k \leq N$) entry of $A$ and
the notation $A_{[1:k,1:j]}$ indicates the upper left submatrix of $A$ of size $k \times j$,
namely $[A_{[1:k,1:j]}]_{\ell,m}:=[A]_{\ell,m}$ for $\ell=1,\dots,k$ and $m=1,\dots,j$.
Similarly, $A_{[1:k,j]}$ denotes the column vector in $\mathbb{R}^k$ corresponding to the
first $k$ elements of the column $j$ of $A$, namely  $[A_{[1:k,j]}]_{\ell} := [A]_{\ell,j}$
for $\ell=1,\dots,k$. 
Finally, the usual inner product of $L^2(0,T;\mbbc^N)$ is denoted by
$\scp{\cdot,\cdot}_{L^2}$, and $L^2:=L^2(0,T;\mathbb{R})$.

%%%%%%%%%%%%%%%%%%%%%%%%%%%%%%%%%%%%%%%%%%%%%%%%%%%%%%%%%%%%%%%%%%%%%%%%%%%%%%%%%%%%%
\section{Hamiltonian reconstruction and a greedy reconstruction algorithm}\label{sec:bilinear_setting}

Consider the finite-dimensional Schr\"odinger equation
\begin{equation}\label{eq:Schroedinger_equation}
\begin{split} 
i\dot{\psipmb}(t)&=[H+\epsilon(t)\mustar ]\psipmb(t),\quad t\in(0,T],\\
\psipmb(0)&=\psipmb_0,
\end{split}
\end{equation}
governing the time evolution of the state of a quantum system $\psipmb \in \mathbb{C}^N$, $N \in \mathbb{N}^+$.
The internal Hamiltonian $H$ is assumed to be known and the goal is to identify the unknown dipole moment 
operator $\mustar$ that couples the quantum system to a time-dependent external laser 
field $\epsilon\in L^2$, which acts as a control function on the system.
Both internal Hamiltonian $H$ and dipole operator $\mu^\star$ belong to ${\rm Sym}(N)$, and $\psipmb(t)$
lies in $\mathbb{C}^{N}$. 
The initial condition is $\psipmb_0\in\mathbb{C}^{N}$ which satisfies $\norm{\psipmb_0}_2=1$.

The true dipole operator $\mustar$ is unknown and assumed to lie in a space spanned by $K$ linearly independent
matrices $\mu_1,\ldots,\mu_K$, forming the set $\mathcal{B}_{\mu}=(\mu_j)_{j=1}^K \subset {\rm Sym}(N)$,
where $K\in\mbbn$ satisfies $1\leq K \leq {\rm dim \, Sym}(N)=\frac{N(N+1)}{2}$. 
Hence, we write $\mustar = \mu(\al^\star)$, with $\mu(\al) := \sum_{j=1}^K \al_j \mu_j$ for any $\al \in \mathbb{R}^K$.

To identify the true operator $\mustar$ one uses a set of control fields $(\epsilon^m)_{m=1}^K \subset L^2$
to perform $K$ laboratory experiments and obtain the experimental data 
$$\varphi(\mustar,\epsilon^m):= \scp{\psipmb_1,\psipmb_T(\mustar,\epsilon^m)}, \text{ for $m=1,\dots,K$}.$$ 
Here, $\psipmb_T(\mustar,\epsilon)$ denotes the solution to \eqref{eq:Schroedinger_equation} at time $T>0$, 
corresponding to the dipole operator $\mustar$ and a laser field $\epsilon$. 
The value $\psipmb_1\in\mathbb{C}^{N}$ is a fixed state with $\norm{\psipmb_1}_2=1$ and acts 
on a state of the quantum system as an observer operator. 
The measurements are assumed not to be affected by any type of noise.

Using the set of control fields $(\epsilon^m)_{m=1}^K$ and the corresponding
experimental data $(\varphi(\mustar,\epsilon^m))_{m=1}^K \subset \mathbb{C}$, one solves the nonlinear least-squares problem
\begin{align}
\min_{\al\in\mathbb{R}^K}\sum_{m=1}^{K}|\varphi(\mustar,\epsilon^m)-\varphi(\mu(\al),\epsilon^m)|^2,\label{eq:bilinear_main_problem}
\end{align}
where $\varphi(\mu(\al),\epsilon^m):= \scp{\psipmb_1,\psipmb_T(\mu(\al),\epsilon^m)}$,
with $\psipmb_T(\mu(\al),\epsilon^m)$ the solution to \eqref{eq:Schroedinger_equation} evaluated at time $T$ corresponding to
the dipole operator $\mu(\al)$ and the laser field $\epsilon^m$.
Clearly $\mu(\al^\star)$ is a global solution to \eqref{eq:bilinear_main_problem}.

If the control functions $(\epsilon^m)_{m=1}^K$ and the data $(\varphi(\mustar,\epsilon^m))_{m=1}^K$ are given,
problem \eqref{eq:bilinear_main_problem} is a standard parameter-identification inverse problem written in a 
minimization form.
The choice of the laser fields $(\epsilon^m)_{m=1}^K$ can affect significantly the properties
of \eqref{eq:bilinear_main_problem} and the corresponding solutions.
To design an optimized set of control functions, in particular with the goal of improving local convexity 
properties of \eqref{eq:bilinear_main_problem}, Maday and Salomon introduced in \cite{madaysalomon}
a numerical strategy which separates the reconstruction process of $\mustar$ in offline and online phases.
In the offline phase, a greedy reconstruction (GR) algorithm computes a set of optimized laser fields 
$(\epsilon^m)_{m=1}^K$ by exploiting only the quantum model \eqref{eq:bilinear_main_problem} 
and without using any laboratory data.
In the online phase, the computed control fields $(\epsilon^m)_{m=1}^K$ are used experimentally to produce the
laboratory data $\varphi(\mustar,\epsilon^m):= \scp{\psipmb_1,\psipmb_T(\mustar,\epsilon^m)}$ and to
solve the nonlinear problem \eqref{eq:bilinear_main_problem}.

While the online phase consists (mathematically) in solving a classical parameter-identification inverse problem,
the offline phase requires the GR algorithm introduced in \cite{madaysalomon}.
The ideal goal of this offline/online framework is to find a good approximation of the unknown 
operator for which the difference at time $T$  between observed experimental data and 
numerically computed data is the smallest for any control. 
In other words, one aims at finding a matrix $\mu$ that solves
\begin{align}
\min_{\mu\in\text{span}\,\mathcal{B}_{\mu}}\;\;\max_{\epsilon \in L^2} |\varphi(\mustar,\epsilon)-\varphi(\mu,\epsilon)|^2,\label{eq:bilinear_minmax}
\end{align}
or equivalently an $\al$ that solves
\begin{align}
\min_{\al\in\mathbb{R}^K}\;\;\max_{\epsilon \in L^2} |\varphi(\mu(\alstar),\epsilon)-\varphi(\mu(\al),\epsilon)|^2.\label{eq:bilinear_minmax_alpha}
\end{align}
Therefore, the goal of the GR algorithm is to generate a set of $K$ control functions such that 
a computed solution to \eqref{eq:bilinear_main_problem} is also a solution to \eqref{eq:bilinear_minmax}-\eqref{eq:bilinear_minmax_alpha}. 
To do so, the heuristic argument used in \cite{madaysalomon} is that the GR algorithm must attempt to distinguish numerical data for any two 
$\mu(\widetilde{\al}),\mu(\widehat{\al})\in {\rm span}\,\mathcal{B}_{\mu}$, $\mu(\widetilde{\al})\neq\mu(\widehat{\al})$, 
without performing any laboratory experiment. Following this idea, Maday and Salomon defined the GR algorithm as an iterative procedure
that performs a sweep over the linearly independent matrices $(\mu_k)_{k=1}^K$ and 
computes a new control field $\epsilon^{k+1}$ at each iteration.
Suppose that the control fields $\epsilon^1,\dots,\epsilon^k$ are already computed, the new control function $\epsilon^{k+1}$ is obtained by two sub-steps:
one first solves the identification problem
\begin{equation}\label{minsq}
\min_{\al_1,\dots,\al_k}\sum_{m=1}^k\Bigl\vert
\varphi(\sum_{j=1}^k\al_j\mu_j,\epsilon^m) - \varphi(\mu_{k+1},\epsilon^m)
\Bigr\vert^2,
\end{equation}
which gives the coefficients $\al_1^k,\dots,\al_k^k$,
and then computes the new field as
\begin{equation}\label{maxsq}
\epsilon^{k+1} \in \hbox{argmax}_{\epsilon \in L^2} \Bigl\vert \varphi(\mu_{k+1},\epsilon)-\varphi\Bigl(\sum_{j=1}^k\al_j^k\mu_j,\epsilon\Bigr)
\Bigr\vert^2.
\end{equation}
The step of solving Problem \eqref{minsq} is called \textit{fitting step}, since one attempts to compute a vector 
$\al^k:=[\al_1^k,\dots,\al_k^k]^\top$ that fits the quantities $\varphi(\sum_{j=1}^k\al_j^k\mu_j,\epsilon^m)$
and $\varphi(\mu_{k+1},\epsilon^m)$. In other words, the new basis element $\mu_{k+1}$ is considered and one identifies
an element $\mu^k(\al^k):=\sum_{j=1}^k \al_j^k \mu_j$ such that none of the already computed control functions
$\epsilon^1,\dots,\epsilon^k$ is capable of distinguishing the observations 
$\varphi(\mu^k(\al^k),\epsilon)$ and $\varphi(\mu_{k+1},\epsilon)$ (namely
$\varphi(\mu^k(\al^k),\epsilon^m) \neq \varphi(\mu_{k+1},\epsilon^m)$ for $m=1,\dots,k$).
The step of solving problem \eqref{maxsq} is called \textit{discriminatory step},
because one computes a control function $\epsilon^{k+1}$ that is capable of distinguishing (discriminating) 
$\varphi(\mu^k(\al^k)\mu_j,\epsilon^{k+1})$ from $\varphi(\mu_{k+1},\epsilon^{k+1})$.

The full GR algorithm is stated in Algorithm \ref{algo:bilinear_main}.\footnote{Notice that the 
initialization problem \eqref{eq:bilinear_initialization} is different from the one considered 
in \cite{madaysalomon}, which was stated anyway to be arbitrary. The reason for our choice 
is that (as we will see in the next sections) this slightly modified initialization 
problem \eqref{eq:bilinear_initialization} will be essential to obtain convergence.}
\begin{algorithm}[t]
	\caption{Greedy Reconstruction Algorithm}
	\begin{small}
	\begin{algorithmic}[1]\label{algo:bilinear_main} 
		\REQUIRE A set of $K$ linearly independent matrices $\mathcal{B}_{\mu}=(\mu_1,\ldots,\mu_K)$.
		\STATE Solve the initialization problem
		\begin{equation}\label{eq:bilinear_initialization}
		\max\limits_{\eps\in L^2} |\varphi(\mu_1,\epsilon) -\varphi(0,0)|^2.
		\end{equation}
		which gives the field $\epsilon^1$ and set $k=1$.
		\WHILE{ $k\leq K-1$ }
		\STATE \underline{Fitting step}: Find $(\al^{k}_j)_{j=1,\dots,k}$ that solve the problem
		\begin{equation}\label{eq:bilinear_fitting_step}
		\min\limits_{\al \in\mathbb{R}^k}\sum_{m=1}^{k}|\varphi(\mu_{k+1},\epsilon^m)-\varphi(\mu^k(\al),\epsilon^m) |^2.
		\end{equation}
		\STATE \underline{Discriminatory step}: Find $\epsilon^{k+1}$ that solves the problem
		\begin{equation}\label{eq:bilinear_discriminatory_step}
		\max\limits_{\epsilon\in L^2}|\varphi(\mu_{k+1},\epsilon)-\varphi(\mu^k(\al^k),\epsilon) |^2.
		\end{equation}
		\STATE Update $k \leftarrow k+1$.
		\ENDWHILE
	\end{algorithmic}
	\end{small}
\end{algorithm}
Notice how the algorithm is obtained by a sequence of minimization and maximization problems,
mimicking exactly the structure of the min-max 
problem \eqref{eq:bilinear_minmax}-\eqref{eq:bilinear_minmax_alpha}.

Notice also that, since the goal of the GR algorithm is to compute control functions that 
allow one to distinguish between the states of the system corresponding to any possible dipole matrix,
the algorithm implicitly attempts to compute control functions that make the online
identification problem \eqref{eq:bilinear_main_problem} locally strictly convex (hence uniquely solvable).
This is an important observation that we will use to begin our convergence analysis.

Let us conclude this section with a final remark about the laboratory measurements.
Throughout this paper, these are assumed to be not affected by any type of noise, even though 
noise is a significant factor that has to be dealt with; see \cite[Remark 1]{lebris:ham_id} and references therein.
However, the main goal of the present work is the numerical and convergence analysis of the computational framework 
and the GR algorithm introduced in \cite{madaysalomon}, where noisy effects in taking measurements are also neglected.

%%%%%%%%%%%%%%%%%%%%%%%%%%%%%%%%%%%%%%%%%%%%%%%%%%%%%%%%%%%%%%%%%%%%%%%%%%%%%%%%%%%%%
\section{Linear-quadratic reconstruction problems}\label{sec:LQ_problem}

Consider a state $\vp$ whose time evolution is governed by the (real) ordinary differential equation
\begin{equation}\label{eq: linear ODE}
\begin{split} 
\dot{\vp}(t)&=A\vp(t)+B^\star\eps(t),\quad t \in(0,T],\\
\vp(0)&=\vp_0,
\end{split}
\end{equation}
where $A\in\mathbb{R}^{N\times N}$ is a given matrix for $N\in\mathbb{N}^+$,
the initial condition is $\vp_0\in\mathbb{R}^N$,
and $\eps\in E_{ad}$ denotes a control function belonging to $E_{ad}$, a non-empty
and weakly compact subset of $\Ltwo$ (e.g., a closed, convex and bounded subset of $\Ltwo$).
The true control matrix $\Bstar \in \mathbb{R}^{N \times M}$,  for $M\in\mathbb{N}^+$, 
is unknown and assumed to lie in the space spanned 
by a set of linearly independent matrices 
$\mathcal{B}=\{B_1,\ldots,B_K\}\subset\mathbb{R}^{N\times M}$, $1\leq K\leq NM$,
and we write $\Bstar=\sum_{j=1}^{K}\al_j^\star B_j=:B(\al^\star)$.

As in the case of the Hamiltonian reconstruction problem, 
to identify the unknown matrix $\Bstar$ one can consider a set of control functions
$(\eps^m)_{m=1}^K \subset E_{ad}$ and use it experimentally to obtain the data
$C\vp_T(\Bstar,\eps^m)$, $m=1,\dots,K$. Here, $\vp_T(\Bstar,\eps)$ denotes the solution 
of \eqref{eq: linear ODE} at time $T$ and corresponding to a control function $\eps$ 
and to the control matrix $\Bstar$. Further, $C \in \mathbb{R}^{P\times N}$ is a given observer matrix. 

As in Section \ref{sec:bilinear_setting}, the reconstruction process is split into online
and offline phases. In the offline phase, the GR algorithm computes the control functions $(\eps^m)_{m=1}^K$.
These are then used in the online phase, in which the laboratory data 
$$C\vp_T(\Bstar,\eps^m), \; m=1,\dots,K$$
are obtained and the identification problem
\begin{align}
\min_{\al\in\mathbb{R}^K}\sum_{m=1}^{K} \norm{C\vp_T(\Bstar,\eps^m)-C\vp_T(B(\al),\eps^m) }_2^2 \label{eq: main problem}
\end{align}
is solved.

As for the Hamiltonian reconstruction problem, the ideal goal of the offline/online framework 
is to find a good approximation of the unknown operator for which the norm difference at time $T$ 
between observed experimental data and numerically computed data is the smallest for any control function. 
In other words, we wish to find a matrix $B$ that solves
\begin{align}
\min_{B\in{\rm span} \, \mathcal{B}}\max_{\eps\in E_{ad}} \norm{C\vp_T(\Bstar,\eps)-C\vp_T(B,\eps)}_2^2\label{eq:minmaxlinear}
\end{align}
or equivalently
\begin{align}
\min_{\al\in\mathbb{R}^K}\max_{\eps\in E_{ad}} \norm{C\vp_T(\Bstar,\eps)-C\vp_T(B(\al),\eps)}_2^2,\label{eq:minmaxlincoeff}
\end{align}
where $B(\al) := \sum_{j=1}^{K}\al_j B_j$.
The GR algorithm generates a set of $K$ controls that attempt to distinguish numerical data for any two 
$B(\widehat{\al}) \neq B(\widetilde{\al})$, without performing any laboratory experiment. 
The GR algorithm for linear-quadratic reconstruction problems is given
in Algorithm \ref{algo: main}.

\begin{algorithm}[t]
	\caption{Greedy Reconstruction Algorithm (linear-quadratic case)}
	\begin{small}
	\begin{algorithmic}[1]\label{algo: main} 
		\REQUIRE A set of $K$ linearly independent matrices $\mathcal{B}=(B_1,\ldots,B_{K})$.
		\STATE Solve the initialization problem
		\begin{equation}\label{eq: initialization}
		\max\limits_{\eps\in E_{ad}} \norm{C\vp_T(B_1,\eps) - \vp_T(0,0)}_2^2.
		\end{equation}
		which gives the field $\eps^1$ and set $k=1$.
		\WHILE{ $k\leq K-1$ }
		\STATE \underline{Fitting step}: Find $(\al^{k}_j)_{j=1,\dots,k}$ that solve the problem
		\begin{equation}\label{eq: fitting step}
		\min\limits_{\al \in\mathbb{R}^k}\sum_{m=1}^{k}\norm{C\vp_T(B_{k+1},\eps^m)-C\vp_T(B^k(\al),\eps^m) }_2^2.
		\end{equation}
		\STATE \underline{Discriminatory step}: Find $\eps^{k+1}$ that solves the problem
		\begin{equation}\label{eq: discriminatory step}
		\max\limits_{\eps\in E_{ad}}\norm{C\vp_T(B_{k+1},\eps)-C\vp_T(B^k(\al^k),\eps) }_2^2.
		\end{equation}
		\STATE Update $k \leftarrow k+1$.
		\ENDWHILE
	\end{algorithmic}
	\end{small}
\end{algorithm}

Since the convergence analysis performed in the next sections focuses on Algorithm \ref{algo: main},
we wish to explain it in more details. The idea is to generate controls that separate the observations 
of system \eqref{eq: linear ODE} at time $T$ for the different elements $B_1,\ldots,B_K$, 
making possible the identification of their respective coefficients $\al_1^{\star},\ldots,\al_K^{\star}$ 
when solving \eqref{eq: main problem}. The initialization is performed by solving the optimal control 
problem \eqref{eq: initialization}, which aims at maximizing the distance (at time $T$) between the observed state of 
the uncontrolled system (namely $\vp_T(0,0)$ corresponding to $\eps=0$) and the observed state of the system
\begin{equation*}
\begin{split}
\dot{\vp}(t)&=A\vp(t)+B_1\eps(t),\\
\vp(0)&=\vp_0.
\end{split}
\end{equation*}
The numerical solution of this maximization problem provides the first control function $\eps^1$.

Assume now that the control functions $\eps^1,\dots,\eps^k$ are computed. The new element
$\eps^{k+1}$ is obtained by performing a fitting step (namely solving problem \eqref{eq: fitting step})
and a discriminatory step (namely solving problem \eqref{eq: discriminatory step}).
In the fitting step, one compares the two systems
\begin{equation*}
\begin{cases} 
\dot{\vp}(t)=A\vp(t)+B_{k+1}\eps^m(t),\\
\vp(0)=\vp_0,
\end{cases}\qquad\quad 
\begin{cases} 
\dot{\vp}(t)=A\vp(t)+\biggl(\sum_{j=1}^{k}\al_jB_j\biggr)\eps^m(t),\\
\vp(0)=\vp_0,
\end{cases}
\end{equation*}
for $m\in\{1,\ldots,k \}$, and looks for an $\al\in\mathbb{R}^k$ for which their observed 
solutions  at time $T$ are as similar as possible (ideally the same, hence indistinguishable). 
We denote by $\al^k=[\al_1^k,\dots,\al_k^k]^\top$ the vector computed by solving \eqref{eq: fitting step}. 
This vector is used in the subsequent discriminatory step, 
which consists in solving the optimal control problem \eqref{eq: discriminatory step}. 
Here, we compute a control function $\eps^{k+1}$ that maximizes the distance (at time $T$) between 
the solutions of the two systems
\begin{equation*}
\begin{cases} 
\dot{\vp}(t)=A\vp(t)+B_{k+1}\eps(t),\\
\vp(0)=\vp_0,
\end{cases}\qquad\quad 
\begin{cases} 
\dot{\vp}(t)=A\vp(t)+\sum_{j=1}^{k}\al^k_j B_j \eps(t),\\
\vp(0)=\vp_0,
\end{cases}
\end{equation*}
where now $\al^k_j$ are fixed coefficients and the optimization variable is the control function $\eps$. 
Notice that this maximization problem is well posed, as we will discuss in 
Lemma \ref{lem:ALGinW} in Section \ref{sec:conv}.

We wish to remark again that, since the goal of the GR algorithm is to compute control functions 
that permit to distinguish between the states of the system corresponding to any possible control matrix,
the algorithm implicitly attempts to compute control functions that make the online
identification problem locally uniquely solvable.

With these preparations, we are ready to present our convergence analysis.

%%%%%%%%%%%%%%%%%%%%%%%%%%%%%%%%%%%%%%%%%%%%%%%%%%%%%%%%%%%%%%%%%%%%%%%%%%%%%%%%%%%%%
\section{Convergence Analysis}\label{sec:conv}

The convergence analysis presented in this section begins by recalling
that one of the goals of the GR algorithm is to compute a set control functions
that makes the online identification problem \eqref{eq: main problem} strictly convex
in a neighborhood of the solution $\al^\star$ (and hence locally uniquely solvable).
It is then natural to begin with problem \eqref{eq: main problem} and prove the following lemma,
which gives us an equivalent matrix-vector formulation.

\begin{lemma}[Online identification problem in matrix form]\label{lem: main problem terms of W}
Problem \eqref{eq: main problem} is equivalent to
\begin{align}\label{eq: P_W}
\min_{\al\in\mathbb{R}^K}\; \scp{\al^\star-\al,\widehat{W}(\al^\star-\al)},
\end{align}
where $\widehat{W}\in\mathbb{R}^{K\times K}$ is defined as
\begin{equation}\label{eq: What}
	\widehat{W}:=\sum_{m=1}^K W(\eps^m),
\end{equation}
with $W(\eps^m) \in\mathbb{R}^{K\times K}$ given by
\begin{equation}\label{eq: Wj}
	[W(\eps^m)]_{\ell,j}:=\scp{\gam_\ell(\eps^m),\gam_j(\eps^m)}, \text{ for $\ell,j=1,\dots,K$}, 
\end{equation}
and
\begin{equation}\label{eq: gamma_l,j}
	\gam_\ell(\eps^m):=\int_{0}^{T}Ce^{(T-s)A}B_\ell\eps^m(s)ds, \text{ for $m,\ell=1,\dots,K$}.
\end{equation}
\end{lemma}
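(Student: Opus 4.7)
The plan is to reduce \eqref{eq: main problem} to a quadratic form by exploiting Duhamel's formula and the linearity of \eqref{eq: linear ODE} in the control matrix $B$. First I would apply the variation-of-constants formula to write, for any matrix $B$ and any control $\eps$,
\begin{equation*}
\vp_T(B,\eps) = e^{TA}\vp_0 + \int_0^T e^{(T-s)A}B\eps(s)\,ds.
\end{equation*}
The term $e^{TA}\vp_0$ is independent of $B$ and therefore cancels in every difference $\vp_T(\Bstar,\eps^m)-\vp_T(B(\al),\eps^m)$ that appears in \eqref{eq: main problem}.

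Next I would use the ansatz $\Bstar = \sum_{j=1}^K \al_j^\star B_j$ and $B(\al)=\sum_{j=1}^K \al_j B_j$ together with the bilinearity of the map $(B,\eps)\mapsto \int_0^T Ce^{(T-s)A}B\eps(s)\,ds$ to obtain
\begin{equation*}
C\vp_T(\Bstar,\eps^m)-C\vp_T(B(\al),\eps^m)
= \sum_{j=1}^K (\al_j^\star-\al_j)\,\gam_j(\eps^m),
\end{equation*}
with $\gam_j(\eps^m)$ exactly as in \eqref{eq: gamma_l,j}.

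Then I would expand the squared Euclidean norm of this finite sum of vectors in $\mathbb{R}^P$ via the real scalar product, yielding
\begin{equation*}
\norm{C\vp_T(\Bstar,\eps^m)-C\vp_T(B(\al),\eps^m)}_2^2
=\sum_{\ell,j=1}^K (\al_\ell^\star-\al_\ell)(\al_j^\star-\al_j)\scp{\gam_\ell(\eps^m),\gam_j(\eps^m)}.
\end{equation*}
By the definition \eqref{eq: Wj} of $W(\eps^m)$, the right-hand side is precisely $\scp{\al^\star-\al, W(\eps^m)(\al^\star-\al)}$ (the scalar product now being the one on $\mathbb{R}^K$). Summing over $m=1,\dots,K$ and using linearity together with the definition \eqref{eq: What} of $\widehat{W}$ gives
\begin{equation*}
\sum_{m=1}^K\norm{C\vp_T(\Bstar,\eps^m)-C\vp_T(B(\al),\eps^m)}_2^2
=\scp{\al^\star-\al,\widehat{W}(\al^\star-\al)},
\end{equation*}
which is exactly the objective functional in \eqref{eq: P_W}, establishing the equivalence of the two minimization problems.

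The whole argument is essentially a direct computation, so I do not expect any real obstacle: the only points demanding a bit of care are keeping track of the two different scalar products (one on $\mathbb{R}^P$ through $\norm{\cdot}_2$, one on $\mathbb{R}^K$ in the final quadratic form) and correctly invoking linearity in $B$ to pull the coefficients $\al_j^\star-\al_j$ out of the Duhamel integral.
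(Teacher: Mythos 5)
Your proof is correct and follows essentially the same route as the paper: write $\vp_T$ via the variation-of-constants formula, cancel the $e^{TA}\vp_0$ term, use linearity in $B$ to pull out the coefficients $\al_j^\star-\al_j$, expand the squared norm into the quadratic form with the vectors $\gam_j(\eps^m)$, and sum over $m$ to identify $\widehat{W}$. No gaps.
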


\begin{proof}
Let us define 
\begin{equation}\label{eq: J_main}
J(\al):=\sum_{m=1}^K \norm{C\vp_T(\Bstar,\eps^m)-C\vp_T(B(\al),\eps^m) }_2^2,
\end{equation}
and notice that
$$\vp_T(\Bstar,\eps^m) = e^{TA} \vp_0 + \int_{0}^{T} e^{(T-s)A}\Big(\sum_{j=1}^K\al_j^\star B_j\Big)\eps^m(s)ds,$$
$$\vp_T(B(\al),\eps^m) = e^{TA} \vp_0 + \int_{0}^{T} e^{(T-s)A}\Big(\sum_{j=1}^K\al_j B_j \Big)\eps^m(s)ds.$$
The function $J(\al)$ can be written as
\begin{align*}
J(\al)
%&=\sum_{m=1}^K\norm{\int_{0}^{T}Ce^{(T-s)A}\Big(\sum_{j=1}^K\al_j^\star B_j\Big)\eps^m(s)ds-\int_{0}^{T}%Ce^{(T-s)A}\Big(\sum_{j=1}^K\al_j B_j \Big)\eps^m(s)ds }_2^2\\
&=\sum_{m=1}^K\norm{\int_{0}^{T}Ce^{(T-s)A}\Big(\sum_{j=1}^K(\al_j^\star-\al_j)B_\ell\Big)\eps^m(s)ds}_2^2\\
&=\sum_{m=1}^K\sum_{\ell=1}^K\sum_{j=1}^K(\al_\ell^\star-\al_\ell)(\al_j^\star-\al_j)
\scp{\gam_\ell(\eps^m),\gam_{j}(\eps^m)},
\end{align*}
where the vectors $\gam_\ell(\eps^m)$ are defined in \eqref{eq: gamma_l,j}.
We can now write
\begin{align*}
J(\al)&=\sum_{\ell=1}^K\sum_{j=1}^K(\al_\ell^\star-\al_\ell)(\al_j^\star-\al_j)\sum_{m=1}^K\scp{\gam_\ell(\eps^m),\gam_{j}(\eps^m)}\\
&=\scp{\al^\star-\al,\sum_{m=1}^K W(\eps^m)(\al^\star-\al)}=\scp{\al^\star-\al,\widehat{W}(\al^\star-\al)},
\end{align*}
and the result follows.
\end{proof}

Notice that, the matrices $W(\eps^m)$ defined in \eqref{eq: Wj} can be written
as $W(\eps^m) = \Gamma(\eps^m)^\top \Gamma(\eps^m)$, 
where $\Gamma(\eps^m)=[\gam_1(\eps^m) \, \cdots \, \gam_K(\eps^m)]$.
Hence, $W(\eps^m)$ are symmetric and positive semi-definite. 
This guarantees that $\widehat{W}$ is also symmetric and positive semi-definite.
Therefore, problem \eqref{eq: P_W} is uniquely solved by $\al = \al^\star$ 
if and only if $\widehat{W}$ is positive definite, meaning that the GR algorithm
actually aims at computing a set
of control functions $(\eps^m)_{m=1}^K$ that makes $\widehat{W}$ positive definite.
We then need to study how the positivity of $\widehat{W}$ evolves during the
iteration of the algorithm.
To do so, the first step is to rewrite the three problems \eqref{eq: initialization},
\eqref{eq: fitting step} and \eqref{eq: discriminatory step} also in a matrix-vector form.

\begin{lemma}[The GR Algorithm \ref{algo: main} in matrix form]\label{lem:ALGinW}
Consider Algorithm \ref{algo: main}. It holds that:
\begin{itemize}\itemsep0em
\item The initialization problem \eqref{eq: initialization} is equivalent to
	\begin{equation}\label{eq: or. init. terms of W}
	\max_{\eps\in E_{ad}}\; [W(\eps)]_{1,1}.
	\end{equation}
	
\item The fitting-step problem \eqref{eq: fitting step} is equivalent to
	\begin{equation}\label{eq: fitting-step terms of W}
	\min_{\al\in\mathbb{R}^k}\; \scp{\al,\widehat{W}^k_{[1:k,1:k]}\al}-2\scp{\widehat{W}^k_{[1:k,k+1]},\al},
	\end{equation}
	where $\widehat{W}^k$ is the $k$-th partial sum of $\widehat{W}$, 
	that is $\widehat{W}^k=\sum_{m=1}^{k}W(\eps^m)$, and (recalling Section \ref{sec:notation})
	$\widehat{W}^k_{[1:k,1:k]}\in \mathbb{R}^{k \times k}$ denotes the $k\times k$
	upper-left block of $\widehat{W}^k$ and $\widehat{W}^k_{[1:k,k+1]} \in \mathbb{R}^k$
	is a column vector containing 
	the first $k$ components of the $k+1$-th column of $\widehat{W}^k$.
	
\item The discriminatory-step problem \eqref{eq: discriminatory step} is equivalent to
	\begin{equation}\label{eq: discriminatory-step terms of W}
	\max_{\eps\in E_{ad}}\; \scp{\vpmb,[W(\eps)]_{[1:k+1,1:k+1]}\vpmb},
	\end{equation}
	where $W(\eps)$ is defined in \eqref{eq: Wj} and $\vpmb:=[(\al^k)^\top,\;-1]^\top$.
\end{itemize}
Moreover, problems \eqref{eq: initialization}-\eqref{eq: or. init. terms of W},
\eqref{eq: fitting step}-\eqref{eq: fitting-step terms of W}, 
and \eqref{eq: discriminatory step}-\eqref{eq: discriminatory-step terms of W}
are well posed.
\end{lemma}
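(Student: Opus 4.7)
The plan is to derive each matrix-vector reformulation by substituting the variation-of-constants representation
$$C\vp_T(B(\al),\eps) = Ce^{TA}\vp_0 + \sum_{j=1}^K \al_j\, \gam_j(\eps),$$
already used in the proof of Lemma~\ref{lem: main problem terms of W}, into the three objective functionals. The free term $Ce^{TA}\vp_0$ always cancels in the differences appearing in \eqref{eq: initialization}, \eqref{eq: fitting step} and \eqref{eq: discriminatory step}, so each difference becomes a linear combination of the vectors $\gam_j(\eps)$, and each squared norm becomes a quadratic form in the corresponding coefficients whose matrix is read off from \eqref{eq: Wj}.

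Concretely, for the initialization the difference reduces to $\gam_1(\eps)$, whose squared norm is $\langle \gam_1(\eps),\gam_1(\eps)\rangle = [W(\eps)]_{1,1}$, yielding \eqref{eq: or. init. terms of W}. For the fitting step I would use $C\vp_T(B_{k+1},\eps^m)-C\vp_T(B^k(\al),\eps^m)=\gam_{k+1}(\eps^m)-\sum_{j=1}^k\al_j\gam_j(\eps^m)$, expand the squared norm, sum over $m=1,\dots,k$, and collect the $\al$-dependent contributions: the quadratic part gives $\scp{\al,\widehat{W}^k_{[1:k,1:k]}\al}$, the linear part gives $-2\scp{\widehat{W}^k_{[1:k,k+1]},\al}$, and the remaining term $\sum_m [W(\eps^m)]_{k+1,k+1}$ is independent of $\al$ and can be dropped, giving \eqref{eq: fitting-step terms of W}. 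For the discriminatory step, setting $\vpmb=[(\al^k)^\top,-1]^\top$ lets me rewrite $C\vp_T(B_{k+1},\eps)-C\vp_T(B^k(\al^k),\eps)=-\sum_{j=1}^{k+1}v_j\,\gam_j(\eps)$, whose squared norm equals $\scp{\vpmb,[W(\eps)]_{[1:k+1,1:k+1]}\vpmb}$, giving \eqref{eq: discriminatory-step terms of W}.

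For well-posedness I would argue separately. The fitting step is a convex quadratic minimization on $\mathbb{R}^k$ whose Hessian $\widehat{W}^k_{[1:k,1:k]}$ is symmetric positive semi-definite; since the problem is, up to an additive constant, the normal equation of a finite-dimensional least-squares problem, the right-hand side lies in the range of the Hessian and a minimizer always exists by the projection theorem. For the initialization and discriminatory problems, the key observation is that each map $\eps\mapsto\gam_j(\eps)$ is a bounded linear operator from $\Ltwo$ into the finite-dimensional space $\mathbb{R}^P$, hence compact; therefore $\eps\mapsto[W(\eps)]_{\ell,j}=\scp{\gam_\ell(\eps),\gam_j(\eps)}$ is weakly sequentially continuous, and so are the two objective functionals. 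Since $E_{ad}$ is weakly compact, both maxima are attained.

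The main obstacle is precisely the well-posedness of the two maximization problems: one cannot rely on weak lower semi-continuity (the standard tool for convex minimization), since one is here maximizing a convex quadratic functional on an infinite-dimensional space. The remedy is to exploit the finite rank of the operators $\gam_j$: with range in $\mathbb{R}^P$, $\eps\mapsto\gam_j(\eps)$ is compact, so weak convergence of controls implies norm convergence of the images, and the quadratic functional is even weakly continuous. This, together with the weak compactness of $E_{ad}$, suffices to conclude.
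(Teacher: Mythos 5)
Your proposal is correct and follows essentially the same route as the paper: the equivalences are obtained by the same variation-of-constants computation as in Lemma~\ref{lem: main problem terms of W} (which the paper omits ``for brevity''), and well-posedness of the two maximization problems is the standard maximizing-sequence/weak-compactness argument the paper cites, for which your observation that $\eps\mapsto\gam_j(\eps)$ is a finite-rank (hence weak-to-strong continuous) operator is precisely the detail that makes the objectives weakly continuous. Your least-squares/normal-equations argument for the fitting step is a slightly more explicit version of the paper's remark that the quadratic is bounded below by zero; both are fine.
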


\begin{proof}
The equivalences between \eqref{eq: initialization},
\eqref{eq: fitting step}, \eqref{eq: discriminatory step}
and \eqref{eq: or. init. terms of W}, \eqref{eq: fitting-step terms of W}, 
and \eqref{eq: discriminatory-step terms of W}, respectively, can be proved by similar calculations
to the one used in the proof of Lemma \ref{lem: main problem terms of W}.
We omit them for brevity.

Problem \eqref{eq: fitting step}-\eqref{eq: fitting-step terms of W} is a quadratic
minimization problem with quadratic function bounded from below by zero. 
Hence the existence of a minimizer follows.

Problems \eqref{eq: initialization}-\eqref{eq: or. init. terms of W}
and \eqref{eq: discriminatory step}-\eqref{eq: discriminatory-step terms of W}
are two classical optimal control problems. Since the admissible set $E_{ad}$
is a weakly compact subset of $\Ltwo$, the existence of a maximizer
follows by standard arguments based on maximizing sequences and weak compactness;
see, e.g., \cite{LibroQuantum} and references therein.
\end{proof}

Using the matrix representation given in Lemma \ref{lem:ALGinW}, we can now sketch
the mathematical meaning of the iterations of the GR algorithm.
Assume that at the $k$-th iteration the submatrix $\widehat{W}^k_{[1:k,1:k]}$ is positive definite,
but $\widehat{W}^k_{[1:k+1,1:k+1]}$ has a non-trivial (one-dimensional) kernel.
The GR algorithm first tries to identify (by solving problem \eqref{eq: fitting-step terms of W}) the kernel of $\widehat{W}^k_{[1:k+1,1:k+1]}$,
and then attempts to compute (by solving problem \eqref{eq: discriminatory-step terms of W}) a new control function $\eps^{k+1}$ such that the matrix
$W_{[1:k+1,1:k+1]}(\eps^{k+1})$ is positive on the kernel $\widehat{W}^k_{[1:k+1,1:k+1]}$.
If these happen, then the new updated matrix 
$\widehat{W}^{k+1}=\widehat{W}^k+W(\eps^{k+1})$ has
a positive definite upper-left block $\widehat{W}^{k+1}_{[1:k+1,1:k+1]}$.
Moreover, if these two steps hold for any $k$, then the convergence follows since
after the $K-1$-th iteration the matrix $\widehat{W}=\widehat{W}^K$ results to be
positive definite.
Hence, two questions clearly arise:
\begin{enumerate}\itemsep0em
\item Does the fitting step of the algorithm always compute
the non-trivial kernel of $\widehat{W}^k_{[1:k+1,1:k+1]}$ (in case it is truly non trivial)?
\item Does the discriminatory step of the algorithm always compute 
a control function $\eps^{k+1}$ that makes $\widehat{W}^{k+1}_{[1:k+1,1:k+1]}$ positive definite?
\end{enumerate}

The first question can be answered with the help of the following technical lemma.

\begin{lemma}[On the kernel of symmetric positive semi-definite matrices]\label{lem: if A pos def then also next bigger A}
	Consider a symmetric, positive semi-definite matrix $\widetilde{G}\in\mathbb{R}^{n\times n}$ of the form
	\begin{equation*}
	\widetilde{G}=\begin{bmatrix}
	G&\pmb b\\
	\pmb b^\top&c
	\end{bmatrix},
	\end{equation*}
	where $G\in\mathbb{R}^{(n-1)\times(n-1)}$ is symmetric and positive definite 
	and $\pmb b\in\mathbb{R}^{n-1}$ and $c\in\mathbb{R}$ are such that the kernel of $\widetilde{G}$ 
	is non-trivial.	Then
	\begin{equation*}
	{\rm ker}(\widetilde{G})={\rm span}\bigg\{\begin{bmatrix}
	G^{-1}\pmb{b}\\-1
	\end{bmatrix} \bigg\}.
	\end{equation*}
\end{lemma}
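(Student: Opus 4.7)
The plan is to argue directly from the block-equation $\widetilde{G}v = 0$, using the invertibility of $G$ to force a nontrivial last component of $v$, and then to solve for the upper block.

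First I would pick an arbitrary element $v$ of $\ker(\widetilde{G})$, written in block form as $v = [x^\top, y]^\top$ with $x\in\mathbb{R}^{n-1}$ and $y\in\mathbb{R}$. Expanding $\widetilde{G} v = 0$ according to the block structure of $\widetilde{G}$ yields the two equations
\begin{equation*}
Gx + \pmb{b}\, y = 0, \qquad \pmb{b}^\top x + c\, y = 0.
\end{equation*}
Since $G$ is positive definite and hence invertible, the first equation gives $x = -G^{-1}\pmb{b}\,y$.

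The key observation, and the step I would highlight, is the dichotomy on $y$: if $y = 0$, then the first equation reduces to $Gx = 0$, which by positive definiteness of $G$ forces $x = 0$ and hence $v = 0$. Since we assumed $\ker(\widetilde{G})$ is non-trivial, there exists a non-zero element $v$, which must therefore have $y \neq 0$. Rescaling $v$ so that $y = -1$ (which does not change the span) yields $x = G^{-1}\pmb{b}$, producing exactly the vector in the claim. Conversely, since every kernel element is a scalar multiple of this specific vector, $\ker(\widetilde{G})$ equals its span, which in particular shows the kernel is one-dimensional.

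I do not expect any real obstacle; the only subtlety is the use of positive definiteness of $G$ (strict, not just semi-definiteness) to rule out $y = 0$, which is essential. As a consistency check I would note that substituting $x = G^{-1}\pmb{b}$ and $y = -1$ into the second block equation gives $\pmb{b}^\top G^{-1}\pmb{b} - c = 0$; this is precisely the vanishing of the Schur complement $c - \pmb{b}^\top G^{-1}\pmb{b}$ of $G$ in $\widetilde{G}$, which is the standard algebraic condition characterizing the non-triviality of the kernel under the positive semi-definite hypothesis. This check is not strictly needed for the proof of the stated equality but clarifies why the hypothesis "kernel non-trivial" is compatible with the rest of the assumptions.
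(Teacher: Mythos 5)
Your proof is correct and follows essentially the same route as the paper's: expand the block equation $\widetilde{G}v=0$, use the invertibility of $G$ to solve for the upper block, and rule out a vanishing last component to normalize the kernel vector. The Schur-complement consistency check is a nice extra observation but, as you note, not needed for the stated equality.
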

\begin{proof}
	Since the kernel of $\widetilde{G}$ is non-trivial, there exists a non-zero vector 
	\linebreak $\pmb{u}=\begin{bmatrix}\pmb{v}\\d\end{bmatrix}\in\mathbb{R}^n\setminus \{0\}$ 
	(with $\pmb{v} \in \mathbb{R}^{n-1}$ and $d \in \mathbb{R}$) such that $\widetilde{G}\pmb{u}=0$. 
	Moreover, since $G$ is positive definite, the kernel of $\widetilde{G}$ must be one-dimensional 
	and equal to the span of $\{\pmb{u}\}$. Using the structure of $\pmb{u}$, we write
	$\widetilde{G} \pmb{u} = 0$ as
	\begin{equation}\label{eq:prooffff}
	\begin{cases}
	G\pmb{v}+d\;\pmb{b}=0,\\
	\pmb{b}^\top\pmb{v}+dc=0,
	\end{cases}
	\quad\overset{G \text{ invertible}}{\Longleftrightarrow}\quad
	\begin{cases}
	\pmb{v}=-dG^{-1}\pmb{b},\\
	-d\;\pmb{b}^\top G^{-1}\pmb{b}+dc=0.
	\end{cases}
	\end{equation}
	Now, suppose that $d=0$. This implies that $\pmb{v}=-dG^{-1}\pmb{b}=0$, which in turn implies
	that $\pmb{u}=0$. However, this is a contradiction to the fact that $\pmb{u} \neq 0$. 
	Hence $d \neq 0$. The result follows by the right equations in \eqref{eq:prooffff}
	(divided by $-d$).
\end{proof}

Recalling the equivalent form \eqref{eq: fitting-step terms of W} of the fitting-step problem
\eqref{eq: fitting step}, one clearly see that, if $\widehat{W}^k_{[1:k,1:k]}$ is positive definite,
then the unique solution to \eqref{eq: fitting-step terms of W} is given by $\al^k~=~(\widehat{W}^k_{[1:k,1:k]})^{-1}\widehat{W}^k_{[1:k,k+1]}$.
On the other hand, if we set
\begin{equation*}
\widetilde{G}=\widehat{W}^k_{[1:k+1,1:k+1]},\;G=\widehat{W}^k_{[1:k,1:k]},\; \pmb{b}=\widehat{W}^k_{[1:k,k+1]},\;c=\widehat{W}^k_{[k+1,k+1]},
\end{equation*}
then Lemma \ref{lem: if A pos def then also next bigger A} guarantees that the vector
$\vpmb:=[(\al^k)^\top,\;-1]^\top$ spans the kernel of $\widehat{W}^k_{[1:k+1,1:k+1]}$,
if this is non-trivial. Therefore, we have 
\begin{align*}
\kertxt(\widehat{W}^k_{[1:k+1,1:k+1]})&=\spantxt\Big\{\begin{bmatrix}
(\widehat{W}^k_{[1:k,1:k]})^{-1}\widehat{W}^k_{[1:k,k+1]}\\-1
\end{bmatrix} \Big\}=\spantxt\Big\{\vpmb:=\begin{bmatrix}
\al^k\\-1
\end{bmatrix} \Big\}.
\end{align*}
This means that, if $\widehat{W}^k_{[1:k+1,1:k+1]}$ has a rank defect, then the GR algorithm
finds it by the splitting step.

The answer to the second question posed above is more complicated.
In order to formulate it properly, we need to recall the definition of observability
of an input/output dynamical system of the form
\begin{equation}\label{eq:IOSyst}
\begin{split} 
\dot{\vp}(t)&=A\vp(t)+B\eps(t),\quad \vp(0)=\vp_0, \\
\vz(t)&=C\vp(t),
\end{split}
\end{equation}
with $A\in\mathbb{R}^{N \times N}$, $B\in\mathbb{R}^{N \times M}$, $C\in\mathbb{R}^{P \times N}$;
see, e.g., \cite{Sontag1998}.

\begin{definition}[Observable input-output linear systems]
The input-output linear system \eqref{eq:IOSyst} is said to be observable if the initial state
$\vp(0)=\vp_0$ can be uniquely determined from input/output measurements.
Equivalently, \eqref{eq:IOSyst} is observable if and only if the observability matrix
\begin{equation}
\mathcal{O}_N(C,A) := \begin{bmatrix}
	C\\ CA\\ \vdots\\CA^{N-1}
	\end{bmatrix}
\end{equation}
has full column rank.
\end{definition}
Notice that the matrix $B$ does not affect the observability of system \eqref{eq:IOSyst}.

We now assume that the system is observable, namely that $\textnormal{rank}\;\mathcal{O}_N(C,A) =N$,
and we show that this is a sufficient condition for the GR algorithm to make the matrix
$\widehat{W}$ positive definite.
To do so, we first prove the following lemma regarding the discriminatory step.
Notice that the proof of this result is inspired by classical Kalmann controllability
theory; see, e.g., \cite{coron2007control}.

\begin{lemma}[Discriminatory-step problem for fully observable systems]\label{prop: discr positivity}
	Assume that the matrices $A \in \mathbb{R}^{N \times N}$ and $C \in \mathbb{R}^{P \times N}$ are such that
	$\textnormal{rank }\mathcal{O}_N(C,A)=N$.
	Let $\widehat{W}^k_{[1:k,1:k]}$ be positive definite, $\al^k$ the solution to the fitting-step 
	problem \eqref{eq: fitting step}, and $\vpmb =[(\al^k)^\top,-1]^\top$. 
	Then any solution $\eps^{k+1}$ of the discriminatory-step 	problem 
	\eqref{eq: discriminatory step} satisfies
	\begin{equation*}\label{eq: pos.discr.}
	\scp{\vpmb, W_{[1:k+1,1:k+1]}(\eps^{k+1})\vpmb}
	=\norm{\int_{0}^{T}Ce^{(T-s)A}\Big(B_{k+1}-\sum_{j=1}^{k}\al^k_j B_j \Big)\eps^{k+1}(s)ds}_2^2>0,
	\end{equation*}
	for $k=0,1,\dots,K-1$.
\end{lemma}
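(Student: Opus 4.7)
The plan is to handle the equality and the strict positivity separately, with the substantive work in the latter.

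For the equality, I would simply expand $\scp{\vpmb, W_{[1:k+1,1:k+1]}(\eps^{k+1})\vpmb}$ as in the proof of Lemma~\ref{lem: main problem terms of W}. Writing $\vpmb=[\al^k_1,\dots,\al^k_k,-1]^\top$ and pulling the entries of $\vpmb$ inside the integrals defining $\gam_j(\eps^{k+1})$, one obtains by direct calculation
\begin{equation*}
\scp{\vpmb, W_{[1:k+1,1:k+1]}(\eps^{k+1})\vpmb} = \norm{\int_0^T Ce^{(T-s)A}\Big(B_{k+1}-\sum_{j=1}^k \al^k_j B_j\Big)\eps^{k+1}(s)\,ds}_2^2.
\end{equation*}
This is a routine rearrangement reusing the algebra of Lemma~\ref{lem: main problem terms of W}. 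It also covers $k=0$ with the convention that the empty sum is zero: the right-hand side then reduces to $\norm{\gam_1(\eps^1)}_2^2=[W(\eps^1)]_{1,1}$, i.e., the initialization objective \eqref{eq: or. init. terms of W}.

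For the strict positivity, set $D:=B_{k+1}-\sum_{j=1}^k \al^k_j B_j$. Since $\eps^{k+1}$ maximizes the quantity over $E_{ad}$, it suffices to exhibit a single admissible $\widetilde{\eps}\in E_{ad}$ for which $\int_0^T Ce^{(T-s)A}D\,\widetilde{\eps}(s)\,ds\neq 0$. Two ingredients come together here. First, $D\neq 0$: the matrices $B_1,\ldots,B_{k+1}$ are linearly independent (they belong to $\mathcal{B}$) and $B_{k+1}$ enters $D$ with coefficient $-1$, so no choice of $\al^k$ can produce $D=0$. Second, the observability hypothesis yields that for every nonzero $d\in\mathbb{R}^N$ the map $t\mapsto Ce^{tA}d$ is not identically zero on $[0,T]$: if it were, differentiating $j$ times at $t=0$ for $j=0,\ldots,N-1$ gives $\obsmat d=0$, whence $d=0$ by full column rank, a contradiction.

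Combining these, pick a nonzero column $d:=D_{[1:N,m_0]}$ of $D$; then $s\mapsto Ce^{(T-s)A}d$ is a nonzero element of $L^2(0,T;\mathbb{R}^P)$, so one of its scalar components, say the $i$-th, is not identically zero. Taking $\widetilde{\eps}$ with its only nonzero coordinate being the $m_0$-th, set proportional to that scalar component (with a small scaling factor), makes the $i$-th entry of the integral strictly positive, and the maximizer property of $\eps^{k+1}$ then yields the claimed strict inequality.

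The main obstacle is the admissibility constraint $\widetilde{\eps}\in E_{ad}$: the construction above is performed in $\Ltwo$ and must be placed inside $E_{ad}$. This is a mild richness assumption (e.g., $E_{ad}$ contains a neighborhood of the origin, or at least scalar multiples of simple test controls), which is implicit in the paper's standing framework; once granted, the argument is complete.
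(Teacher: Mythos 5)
Your proposal is correct, and its skeleton matches the paper's: note that $\widetilde B := B_{k+1}-\sum_{j=1}^k\al^k_jB_j\neq 0$ by linear independence, exhibit one admissible control whose observed response $\int_0^T Ce^{(T-s)A}\widetilde B\,\eps(s)\,ds$ is nonzero, and invoke the maximizer property of $\eps^{k+1}$; the equality with the quadratic form is the same algebra as in Lemma~\ref{lem: main problem terms of W} in both cases. Where you diverge is in how the distinguishing control is built. The paper takes a piecewise-constant control equal to a canonical vector $\epmb_i$ on $[\delta,T]$, expands $e^{(T-s)A}$ in a Taylor series (justifying the interchange by dominated convergence), and uses that the resulting function of $\delta$ is analytic and not identically zero (via $CA^j\widetilde{\pmb b}_i\neq 0$ for some $j\le N-1$), so a good $\delta$ exists by isolated roots. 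You instead argue directly that $t\mapsto Ce^{tA}d$ cannot vanish identically for a nonzero column $d$ of $\widetilde B$ (all derivatives at $0$ would give $\mathcal{O}_N(C,A)d=0$), and then choose the control's $m_0$-th component proportional to a nonvanishing scalar component of the response, turning the relevant entry of the integral into $c\int_0^T\bigl([Ce^{(T-s)A}d]_i\bigr)^2ds>0$. This avoids the series manipulation and the isolated-zeros argument and is arguably cleaner, at the price of using a control that depends on the system rather than a simple bang-type one. Your caveat about admissibility is fair but not a defect relative to the paper: the paper likewise asserts its piecewise-constant control lies in $E_{ad}$ although $E_{ad}$ is only assumed nonempty and weakly compact, so both arguments tacitly assume $E_{ad}$ is rich enough to contain (small multiples of) such test controls.
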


\begin{proof}
	Let us define $\widetilde{B}:=B_{k+1}-\sum_{j=1}^{k}\al^k_j B_j$.
	Since the matrices $B_1,\ldots,B_{k+1}$ are assumed to be linearly independent, $\widetilde{B}$ is non-zero.
	
	Now, we consider an arbitrary $\delta\in (0,T)$ and define a control function
	$\widetilde{\eps}\in E_{ad}$ as
	\begin{equation*}
	\widetilde{\eps}(s) :=\begin{cases}
	0,& 0\leq s< \delta,\\
	\epmb_i,& \delta\leq s\leq T,
	\end{cases}
	\end{equation*}
	where $\epmb_i\in\mathbb{R}^M$ is the $i$-th canonical vector for some index $1\leq i\leq M$.
	Further, we denote by $\widetilde{\pmb{b}}_i$ the $i$-th column of $\widetilde{B}$. 
	Since $\widetilde{B}$ is non-zero, we can choose the index $i$ such that $\widetilde{\pmb{b}}_i \neq 0$.
	Now, we compute
	\begin{align*}
	\int_{0}^{T}Ce^{(T-s)A}\widetilde{B}\widetilde{\eps}(s)ds
	&=\int_{\delta}^{T}Ce^{(T-s)A}\widetilde{\pmb{b}}_i ds
	=\int_{\delta}^{T}C\Big[\sum_{j=0}^{\infty}\frac{(T-s)^jA^j}{j!}\Big] \widetilde{\pmb{b}}_i ds\\
	&\overset{(\star)}{=}\Big[\sum_{j=0}^{\infty}\int_{\delta}^{T}\frac{(T-s)^j}{j!}ds\;CA^j\Big] \widetilde{\pmb{b}}_i
	=\Big[\sum_{j=0}^{\infty}\frac{(T-\delta)^{j+1}}{(j+1)!}CA^j\Big] \widetilde{\pmb{b}}_i\\
	&=\sum_{j=0}^{\infty}\beta_j(\delta) CA^j \widetilde{\pmb{b}}_i,
	\end{align*}
	where $\beta_j(\delta):=\frac{(T-\delta)^{j+1}}{(j+1)!}$ and 
	we used the dominated convergence theorem (see, e.g., \cite[Theorem 1.34]{rudin}) to
	interchange integral and infinite sum and obtain the equality $(\star)$. 
	Since the observability matrix $\mathcal{O}_N(C,A)$ has full rank 
	and $\widetilde{\pmb{b}}_i\neq 0$, there exists an index 
	$0\leq j\leq N-1$ such that $CA^j \widetilde{\pmb{b}}_i \neq 0$.
	Hence, $f(\delta):= \sum_{j=0}^{\infty}\beta_j(\delta) CA^j \widetilde{\pmb{b}}_i$
	is an analytic function for $\delta \in (0,T)$ and such that $f \neq 0$.\footnote{To see it, recall that
	$\beta_j(\delta)=\frac{(T-\delta)^{j+1}}{(j+1)!}$,
	consider a function $g(x)=\sum_{j=0}^{\infty}\frac{x^{j+1}}{(j+1)!} \gamma_j$, and assume that there exists
	at least one integer $k$ such that $\gamma_k \neq 0$. Now, if we pick the minimum integer $\widehat{k}$ such that
	$\gamma_{\widehat{k}} \neq 0$, we have that 
	$g(x) = \frac{x^{\widehat{k} +1}}{(\widehat{k} +1)!} \gamma_{\widehat{k}} + \sum_{j=\widehat{k}+1}^{\infty}\frac{x^{j+1}}{(j+1)!} \gamma_j$.
	For $x \rightarrow 0$, the first term behaves as $O(x^{\widehat{k} +1})$, while the second term as 
	$O(x^{\widehat{k} +2})$. Hence, there exists a point $y>0$ such that $g(y) \neq 0$.
	}
	We also know that (non-constant) analytic functions have isolated roots; see, e.g., \cite[Theorem 10.18]{rudin}.
	Therefore we can find a $\delta\in(0,T)$ such that 
	$\sum_{j=0}^{\infty}\beta_j(\delta) CA^j \widetilde{\pmb{b}}_i\neq 0$,
	and obtain the existence of an $\widetilde{\eps}\in E_{ad}$ such that 
	\begin{equation*}
	\int_{0}^{T}Ce^{(T-s)A}\widetilde{B}\widetilde{\eps}(s)ds\neq 0.
	\end{equation*}
	This implies that
	\begin{equation*}
	\begin{split}
	\scp{\vpmb, W_{[1:k+1,1:k+1]}(\eps^{k+1})\vpmb}
	&=\norm{\int_{0}^{T}Ce^{(T-s)A}\Big(B_{k+1}-\sum_{\ell=1}^{k}\al^k_\ell B_\ell \Big)\eps^{k+1}(s)ds}_2^2	\\
	&\geq\norm{\int_{0}^{T}Ce^{(T-s)A}\Big(B_{k+1}-\sum_{\ell=1}^{k}\al^k_\ell B_\ell \Big)\widetilde{\eps}(s)ds}_2^2\\
	&=\norm{\int_{0}^{T}Ce^{(T-s)A}\widetilde{B}\widetilde{\eps}(s)ds}_2^2>0,
	\end{split}
	\end{equation*}
	where we have used that $\eps^{k+1}$ is a maximizer for 
	problem \eqref{eq: discriminatory step}.
\end{proof}

Now we can prove our first main convergence result.

\begin{theorem}[Convergence of the GR algorithm for fully observable systems]\label{thm: overall main conv}
	Assume that the matrices $A \in \mathbb{R}^{N \times N}$ and $C \in \mathbb{R}^{P \times N}$ are such that
	$\textnormal{rank }\mathcal{O}_N(C,A)=N$.
	Let $K\in \{1,\dots,MN\}$ be arbitrary and let
	$\{\eps^1,\ldots,\eps^K \}\subset E_{ad}$ be a family of controls generated by the 
	GR Algorithm \ref{algo: main}. Then the matrix $\widehat{W}$ defined in \eqref{eq: What}
	is positive definite and online identification problem \eqref{eq: main problem} 
	is uniquely solvable by $\al=\al^\star$.
\end{theorem}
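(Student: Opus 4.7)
The plan is to argue by induction on $k$ that the upper-left block $\widehat{W}^k_{[1:k,1:k]}$ is positive definite for every $k=1,\dots,K$. The conclusion then follows by taking $k=K$: since $\widehat{W}=\widehat{W}^K$ is positive definite, Lemma \ref{lem: main problem terms of W} reformulates \eqref{eq: main problem} as a strictly convex quadratic problem with unique minimizer $\al=\al^\star$. For the base case $k=1$, I would apply Lemma \ref{prop: discr positivity} in the degenerate instance $k=0$ (where the fitting sum is empty and $\vpmb=[-1]$, so that the initialization problem \eqref{eq: initialization} coincides with the discriminatory-step problem), obtaining $[W(\eps^1)]_{1,1}>0$ directly.

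For the inductive step, I would first decompose $\widehat{W}^{k+1}_{[1:k+1,1:k+1]}=\widehat{W}^k_{[1:k+1,1:k+1]}+W_{[1:k+1,1:k+1]}(\eps^{k+1})$ and split into two subcases. If $\widehat{W}^k_{[1:k+1,1:k+1]}$ is already positive definite, adding the positive semi-definite matrix $W_{[1:k+1,1:k+1]}(\eps^{k+1})$ preserves positive definiteness. Otherwise, the inductive hypothesis makes its $k\times k$ upper-left block positive definite, so Lemma \ref{lem: if A pos def then also next bigger A} applies and forces the kernel to be the one-dimensional span of $\vpmb=[(\al^k)^\top,-1]^\top$, where $\al^k$ is the fitting-step solution. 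Then for any nonzero $\xpmb\in\mathbb{R}^{k+1}$ satisfying $\scp{\xpmb,\widehat{W}^{k+1}_{[1:k+1,1:k+1]}\xpmb}=0$, both positive semi-definite summands must vanish at $\xpmb$, so $\xpmb=c\vpmb$ for some $c\neq 0$; but Lemma \ref{prop: discr positivity} yields $c^2\scp{\vpmb,W_{[1:k+1,1:k+1]}(\eps^{k+1})\vpmb}>0$, a contradiction. This closes the induction, and the preservation of the inductive hypothesis follows from the fact that $\widehat{W}^{k+1}_{[1:k,1:k]}=\widehat{W}^k_{[1:k,1:k]}+W_{[1:k,1:k]}(\eps^{k+1})$ is a sum of a positive definite and a positive semi-definite matrix.

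The main obstacle is exactly the second subcase of the inductive step: the crux is the tight coupling between the fitting and discriminatory steps, where Lemma \ref{lem: if A pos def then also next bigger A} guarantees that the fitting step pinpoints precisely the kernel direction of the current rank defect of $\widehat{W}^k_{[1:k+1,1:k+1]}$, and Lemma \ref{prop: discr positivity} guarantees that the discriminatory step cures that defect by producing a control $\eps^{k+1}$ on which the new summand $W(\eps^{k+1})$ is strictly positive on precisely that direction. The observability assumption $\ranktxt\mathcal{O}_N(C,A)=N$ enters solely through Lemma \ref{prop: discr positivity}, and is the indispensable ingredient for closing the induction.
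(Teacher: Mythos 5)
Your proposal is correct and follows essentially the same route as the paper: induction on the positive definiteness of $\widehat{W}^k_{[1:k,1:k]}$, with the base case handled by Lemma \ref{prop: discr positivity} at $k=0$ (the initialization problem), and the inductive step split into the two subcases (block already positive definite, or non-trivial kernel identified via Lemma \ref{lem: if A pos def then also next bigger A} and then cured via Lemma \ref{prop: discr positivity}). Your only addition is to spell out explicitly why the sum of the two positive semi-definite blocks becomes positive definite (any vector annihilating the sum must lie in ${\rm span}\{\vpmb\}$, contradicting the strict positivity on $\vpmb$), which the paper states more briefly but identically in substance.
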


\begin{proof}
	By Lemma \ref{lem: main problem terms of W} it is sufficient to show that the matrix $\widehat{W}$ 
	corresponding to the controls $\eps^1,\ldots,\eps^K$ generated by the algorithm is positive definite. 
	The proof of this claim proceeds by induction.

	Lemma \ref{prop: discr positivity} guarantees that there exists an $\eps^1$ such that $[W(\eps^1)]_{1,1}>0$.
	Now, we assume that $\widehat{W}^k_{[1:k,1:k]}$ is positive definite, 
	and we show that $\widehat{W}^{k+1}_{[1:k+1,1:k+1]}$ is positive definite as well.
	
	If $\widehat{W}^{k}_{[1:k+1,1:k+1]}$ is positive definite, then 
	$$\widehat{W}^{k+1}_{[1:k+1,1:k+1]} = \widehat{W}^{k}_{[1:k+1,1:k+1]} + W(\eps^k)_{[1:k+1,1:k+1]}$$ 
	is positive definite as well, since $W(\eps^k)_{[1:k+1,1:k+1]}$ is positive semi-definite.
	
	Assume now that the submatrix $\widehat{W}^k_{[1:k+1,1:k+1]}$ has a non-trivial kernel. 	
	Since $\widehat{W}^k_{[1:k,1:k]}$ is positive definite (induction hypothesis), 
	problem \eqref{eq: fitting-step terms of W} is uniquely solvable with solution $\al^k$. 
	Then, by Lemma \ref{lem: if A pos def then also next bigger A} the (one-dimensional) kernel 
	of $\widehat{W}^k_{[1:k+1,1:k+1]}$ is the span of the the vector $\vpmb =[(\al^k)^\top,\;-1]^\top$.
	Finally, using Lemma \ref{prop: discr positivity} we obtain that the solution $\eps^{k+1}$ to the discriminatory-step 
	problem satisfies
	\begin{equation*}
	0<\scp{\vpmb,[W(\eps^{k+1})]_{[1:k+1,1:k+1]}\vpmb}.
	\end{equation*}
	Hence, the matrix $[W(\eps^{k+1})]_{[1:k+1,1:k+1]}$ is positive definite 
	on the span of $\vpmb$. Therefore 
	$\widehat{W}^{k+1}_{[1:k+1,1:k+1]}=\widehat{W}^k_{[1:k+1,1:k+1]}+[W(\eps^{k+1})]_{[1:k+1,1:k+1]}$
	is positive definite, and the claim follows.
\end{proof}

\begin{remark}[Uniqueness of solution of the min-max problem \eqref{eq:minmaxlincoeff}]\label{rem:minmaxequivmainconv}
	Under the assumption that the system is fully observable, 
	the min-max problem \eqref{eq:minmaxlincoeff} is also uniquely solvable with $\al=\al^\star$. 
	To see this, we first note that \eqref{eq:minmaxlincoeff} can be written in terms of $W(\eps)$:
	\begin{align*}
	\norm{C\vp_T(\Bstar,\eps)-C\vp_T(B(\al),\eps)}^2&=\norm{\int_{0}^{T}Ce^{(T-s)A}\Big(\sum_{j=1}^K (\al_j-\alstar_j)B_j\Big)\eps(s)ds}_2^2\\
	&=\scp{(\al-\alstar),W(\eps)(\al-\alstar)}.
	\end{align*}
	Now, similarly as in the proof of Lemma \ref{prop: discr positivity} and using the full observability of the system, 
	one can show that for any $\widehat{\al}\in\mathbb{R}^{NM}$ with $\widehat{\al}\neq\alstar$ there exists a 
	control $\eps(\widehat{\al})$ such that 
	\begin{equation*}
	\scp{(\widehat{\al}-\alstar),W(\eps(\widehat{\al}))(\widehat{\al}-\alstar)}>0.
	\end{equation*}
	Therefore the unique solution to \eqref{eq:minmaxlincoeff} is $\al=\al^\star$.
\end{remark}

Notice that, Theorem \ref{thm: overall main conv} does not require any particular assumption
on the matrices $B_1,\dots,B_K$, which can be arbitrarily chosen with the only constraint to be linearly
independent. Moreover, the number $K \in \{ 1 , \dots , MN\}$ can be fixed arbitrarily and the GR algorithm
will compute control functions that permit the exact reconstruction of the coefficients of the linear
combination of the first $K$ components of $B^\star$ in a basis $\{ B_1,\dots,B_{MN} \}$.
To be more precise, if the unknown $B^\star$ belongs to the span of $K$ the linearly independent
matrices $B_1,\dots,B_K$ used by the algorithm, then, using the control functions generated by the GR
algorithm, the unknown $B^\star$ can be fully reconstructed.
If $B^\star$ lies in the span of $\widetilde{K} \in \{K+1,K+2,\dots,MN\}$ linearly independent 
matrices $B_1,\dots,B_{\widetilde{K}}$, but only the first $K$ of these are used by the algorithm 
(and in the online identification problem), then one reconstructs exactly the $K$ coefficients 
corresponding to the first elements $B_1,\dots,B_K$.
Furthermore, the ordering of the $K$ considered matrices does not affect the convergence result
of Theorem \ref{thm: overall main conv}.

These observations are no longer true if the system is non-fully observable, that is
${\rm rank}\,\mathcal{O}_N(C,A)=\mathcal{R}<N$.
In this case, the choice of the linearly independent matrices $B_1,\dots,B_K$ and their ordering become
crucial for the algorithm. In particular, we are going to show that the method can recover at most 
$K=\mathcal{R}M$ components of the unknown vector $\al^\star$, if appropriate matrices $B_1,\dots,B_K$ are
chosen. Moreover, we will see that an inappropriate choice of matrices $B_1,\dots,B_K$
can lead to completely wrong results with an arbitrary error.

For our analysis, we begin by choosing a set of $K=NM$ matrices by exploiting the kernel of the observability
matrix. In particular, recalling that ${\rm rank}\,\mathcal{O}_N(C,A)=\mathcal{R}<N$, the rank-nullity
theorem allows us to consider a basis $\{\vpmb_j\}_{j=1}^{N}\subset \mathbb{R}^N$ of $\mathbb{R}^N$, 
such that
\begin{align}
\vpmb_j\notin \textnormal{ker}\;\mathcal{O}_N(C,A), &\qquad\; j=1,\ldots,\mathcal{R}, \label{eq:vj_1} \\
\vpmb_j\in \textnormal{ker}\;\mathcal{O}_N(C,A), &\qquad j=\mathcal{R}+1,\ldots,N, \label{eq:vj_2}
\end{align}
where clearly $\textnormal{span}\{\vpmb_j\}_{j=\mathcal{R}+1}^{N}= \textnormal{ker}\;\mathcal{O}_N(C,A)$.
We now define a basis $\{B^\mathcal{O}_k\}_{k=1}^{NM}$ of $\mathbb{R}^{N\times M}$ as
\begin{equation}\label{eq:our_basis}
\begin{split}
B^\mathcal{O}_1 &= \vpmb_1\epmb_1^\top, \; B^\mathcal{O}_2 = \vpmb_1\epmb_2^\top, \; \cdots, \; B^\mathcal{O}_M = \vpmb_1\epmb_M^\top, \\
B^\mathcal{O}_{M+1} &= \vpmb_2\epmb_1^\top, \; B^\mathcal{O}_{M+2} = \vpmb_2\epmb_2^\top, \; \cdots, \; B^\mathcal{O}_{2M} = \vpmb_2\epmb_M^\top, \\
&\vdots \qquad \qquad \qquad \vdots \qquad \qquad  \qquad \vdots\\
B^\mathcal{O}_{(N-1)M+1} &= \vpmb_N\epmb_1^\top, \; B^\mathcal{O}_{(N-1)M+2} = \vpmb_N\epmb_2^\top, \; \cdots, \; B^\mathcal{O}_{NM} = \vpmb_N\epmb_M^\top,
\end{split}
\end{equation}
where $\epmb_\ell\in\mathbb{R}^M$, for $\ell=1,\dots,M$, are the canonical vectors in $\mathbb{R}^M$.
Notice that, since the vectors $(\vpmb_j)_{j=1}^N$ are linearly independent, the set $\{B^\mathcal{O}_k\}_{k=1}^{NM}$ 
is clearly a basis of $\mathbb{R}^{N\times M}$. 

From a computational point of view, the vectors $\vpmb_j$ can be obtained by a singular value decomposition (SVD) 
of the observability matrix $\obsmat = U\Sigma V^\top$, where the columns of $V$ form a basis of $\mathbb{R}^N$
and the last $N-\mathcal{R}$ columns of $V$ span the kernel of $\obsmat$; see, e.g., \cite[Theorem 5.2]{trefethen97}.
Therefore, one can set $\vpmb_j = V_{[:,j]}$, $j=1,\ldots,N$.

Our first result for non-fully observable systems says that, if the basis $\{B^\mathcal{O}_k\}_{k=1}^{NM}$
is considered, then we can reduce the reconstruction of 
$B^\star=\sum_{j=1}^{MN} \al_j^\star B^\mathcal{O}_j$ only to the first $\mathcal{R}M$ coefficients
$\al_1,\dots,\al_{\mathcal{R}M}$. This is proved in the next lemma, where we use the notation
\begin{equation*}
B_{\mathcal{R}}(\al^\star):=\sum_{j=1}^{\mathcal{R}M}\al_j^\star B^\mathcal{O}_j.
\end{equation*}

\begin{lemma}[Online identification problem for non-fully observable systems]\label{lem:mainproblemlesscoefficients}
	Consider the basis $\{B^\mathcal{O}_k\}_{k=1}^{NM}$ constructed as in \eqref{eq:our_basis}
	(with vectors $\vpmb_j$, $j=1,\dots,N$, as in \eqref{eq:vj_1}-\eqref{eq:vj_2}).
	The online least-squares problem \eqref{eq: main problem} (with $K=MN$) is equivalent to
	\begin{align}\label{eq: impr main problem}
	\min_{\al\in\mathbb{R}^{\mathcal{R}M}}\; \sum_{m=1}^{NM}\norm{C\vp_T(B^\star,\eps^m)-C\vp_T(B_{\mathcal{R}}(\al),\eps^m) }_2^2.
	\end{align}
\end{lemma}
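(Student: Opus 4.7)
The plan is to exploit the structure of the chosen basis $\{B^\mathcal{O}_k\}_{k=1}^{NM}$: the last $(N-\mathcal{R})M$ basis elements are of the form $\vpmb_r \epmb_\ell^\top$ with $\vpmb_r \in \kertxt\,\obsmat$, and I expect these to contribute nothing to the observation $C\vp_T(\,\cdot\,,\eps)$. If this is established, then the objective function in \eqref{eq: main problem} depends only on the first $\mathcal{R}M$ components of $\al$, and the equivalence follows immediately.

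First I would write, via variation-of-constants,
\begin{equation*}
C\vp_T(B(\al),\eps^m) = Ce^{TA}\vp_0 + \sum_{j=1}^{NM}\al_j\int_0^T C e^{(T-s)A} B^\mathcal{O}_j \eps^m(s)\,ds,
\end{equation*}
and the analogous expression for $C\vp_T(B^\star,\eps^m)$ with coefficients $\al_j^\star$. The key observation to prove is that for each $j \geq \mathcal{R}M + 1$, writing $B^\mathcal{O}_j = \vpmb_r \epmb_\ell^\top$ with $r \geq \mathcal{R}+1$, one has $Ce^{(T-s)A}\vpmb_r = 0$ for all $s \in [0,T]$. This is the crux of the argument.

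To prove this vanishing, I would expand $Ce^{(T-s)A}\vpmb_r = \sum_{n=0}^{\infty}\frac{(T-s)^n}{n!}CA^n\vpmb_r$ (as in the proof of Lemma \ref{prop: discr positivity}). Since $\vpmb_r \in \kertxt\,\obsmat$, the definition of $\obsmat$ gives $CA^n\vpmb_r = 0$ for $n = 0,1,\dots,N-1$. By the Cayley--Hamilton theorem, $A^n$ for $n\geq N$ is a polynomial in $I, A, \dots, A^{N-1}$, so $CA^n\vpmb_r = 0$ for every $n\geq 0$. Hence the whole series vanishes, and consequently $\int_0^T Ce^{(T-s)A} B^\mathcal{O}_j\eps^m(s)\,ds = \bigl(\int_0^T Ce^{(T-s)A}\vpmb_r \epmb_\ell^\top \eps^m(s)\,ds\bigr) = 0$.

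Combining, both $C\vp_T(B^\star,\eps^m)$ and $C\vp_T(B(\al),\eps^m)$ depend only on the coefficients $\al_1^\star,\dots,\al_{\mathcal{R}M}^\star$ and $\al_1,\dots,\al_{\mathcal{R}M}$ respectively, and coincide with $C\vp_T(B_\mathcal{R}(\al^\star_{[1:\mathcal{R}M]}),\eps^m)$ and $C\vp_T(B_\mathcal{R}(\al_{[1:\mathcal{R}M]}),\eps^m)$. Therefore the cost functional in \eqref{eq: main problem} is independent of the last $(N-\mathcal{R})M$ components of $\al$, and minimizing over $\mathbb{R}^{NM}$ is equivalent to minimizing the reduced functional over $\mathbb{R}^{\mathcal{R}M}$, which is precisely \eqref{eq: impr main problem}. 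The only mildly delicate step is the invocation of Cayley--Hamilton to extend the kernel property from the first $N$ powers to all powers of $A$; everything else is a direct substitution.
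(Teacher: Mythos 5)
Your proof is correct and takes essentially the same route as the paper: both arguments reduce the lemma to showing $Ce^{(T-s)A}B^\mathcal{O}_j=0$ for $j\geq\mathcal{R}M+1$ by combining the exponential series with the Cayley--Hamilton theorem, so that $\vpmb_r\in\kertxt\,\mathcal{O}_N(C,A)$ forces $CA^n\vpmb_r=0$ for every $n\geq 0$. The only cosmetic difference is that the paper collapses $e^{(T-s)A}$ into a degree-$(N-1)$ polynomial in $A$ and factors out the observability matrix, whereas you annihilate the series term by term.
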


\begin{proof}
	Notice that, for any $\ell \in \{1,2,\dots, NM\}$ and $s \in [0,T]$, there exist
	$N$ functions $\widetilde{\beta}_j$ such that
	\begin{align*}
	Ce^{(T-s)A}B^\mathcal{O}_\ell&=C\sum_{j=0}^{\infty}\frac{(T-s)^j}{j!}A^j B^\mathcal{O}_\ell 
	\overset{(\star)}{=}C\Big[\sum_{j=0}^{N-1}\widetilde{\beta}_j(s) A^j\Big] B^\mathcal{O}_\ell\\
	&=\Big[\widetilde{\beta}_0(s)I_N,\widetilde{\beta}_1(s)I_N,\ldots,\widetilde{\beta}_{N-1}(s)I_N \Big]\mathcal{O}_N(C,A)B^\mathcal{O}_\ell,
	\end{align*}
	where we have used the Cayley-Hamilton theorem (see, e.g., \cite[p.109]{horn2012}) to obtain the equality $(\star)$. 
	If $\ell\in\{\mathcal{R}M+1,\ldots,NM \}$, then $B^\mathcal{O}_\ell=\vpmb_j\epmb_i^\top$ with $j\geq \mathcal{R}+1$, 
	hence $\vpmb_j\in \textnormal{ker}\;\mathcal{O}_N(C,A)$ and therefore
	\begin{equation*}
	\mathcal{O}_N(C,A)B^\mathcal{O}_\ell = \underbrace{\mathcal{O}_N(C,A)\vpmb_j}_{=0}\epmb_i^\top = 0.
	\end{equation*}
	This means that, for all $\ell\in\{\mathcal{R}M+1,\ldots,NM \}$ and $s\in[0,T]$,
	$Ce^{(T-s)A}B^\mathcal{O}_\ell=0$,
	and thus
	\begin{equation*}
	\int_{0}^{T}Ce^{(T-s)A}B^\mathcal{O}_\ell\eps(s)ds=0,
	\end{equation*}
	for any control function $\eps \in E_{ad}$.
	Now, recalling the definition of $J(\al)$ from the proof of Lemma \ref{lem: main problem terms of W},
	we write the least-squares problem \eqref{eq: main problem} as
	\begin{align*}
	J(\al) &=\sum_{m=1}^{NM}\norm{\sum_{j=1}^{NM}(\al_j^\star-\al_j)\int_{0}^{T}Ce^{(T-s)A}B^\mathcal{O}_j\eps^m(s)ds}_2^2\\
	&=\sum_{m=1}^{NM}\norm{\sum_{j=1}^{\mathcal{R}M}(\al_j^\star-\al_j)\int_{0}^{T}Ce^{(T-s)A}B^\mathcal{O}_j\eps^m(s)ds}_2^2,
	\end{align*}
	which is our claim.
\end{proof}

Lemma \ref{lem:mainproblemlesscoefficients} implies that the coefficients 
$\al_{\mathcal{R}M+1},\dots,\al_{MN}$ do not affect the cost function to be minimized.
Therefore, any vector $\al \in \mathbb{R}^{MN}$ of the form
$$\al = [ \al_1^\star , \cdots , \al_{\mathcal{R}M}^\star, \gamma_{\mathcal{R}M+1}, \cdots, \gamma_{MN} ]^\top$$
is a global solution to \eqref{eq: main problem}, for any $\gamma_j \in \mathbb{R}$, $j=\mathcal{R}M+1,\dots,MN$.
This means that, one uses really only the first $\mathcal{R}M$ elements of the basis.
In fact, as we are going to show in Lemma \ref{prop: discr positivity 2} and Theorem \ref{thm: main conv special basis},
only their corresponding coefficients can be reconstructed, while no information can be obtained for the remaining ones. 
It is therefore natural, for ${\rm rank} \, \mathcal{O}_N(C,A) = \mathcal{R}<N$, to use the GR algorithm
with only the first $\mathcal{R}M$ basis elements $B^\mathcal{O}_1,\dots,B^\mathcal{O}_{\mathcal{R}M}$.
In this case, the proof of convergence for the GR algorithm is analogous to what we have done to obtain
Theorem \ref{thm: overall main conv}. 
We first prove a version of Lemma \ref{prop: discr positivity} adapted to non-fully observable systems.

\begin{lemma}[Discriminatory-step problem for non-fully observable systems]\label{prop: discr positivity 2}
	Assume that ${\rm rank} \, \mathcal{O}_N(C,A) = \mathcal{R}<N$ and that the GR algorithm is run until the $k$-th
	iteration, with $k<\mathcal{R}M$, using the linearly independent matrices $B^\mathcal{O}_1,\dots,B^\mathcal{O}_{\mathcal{R}M}$
	defined in \eqref{eq:our_basis}. Let $\widehat{W}^k_{[1:k,1:k]}$ be positive definite, 
	and let $\al^k$ be the solution to the fitting-step problem \eqref{eq: fitting step}. 
	Then any solution $\eps^{k+1}$ of the discriminatory-step problem \eqref{eq: discriminatory step} 
	satisfies for $k=1,\ldots \mathcal{R}M-1$
	\begin{equation*}
	\scp{\vpmb, W_{[1:k+1,1:k+1]}(\eps^{k+1})\vpmb}
	=\norm{\int_{0}^{T}Ce^{(T-s)A}\Big(B^\mathcal{O}_{k+1}-\sum_{j=1}^{k}\al^k_j B^\mathcal{O}_j \Big)\eps^{k+1}(s)ds}_2^2>0,
	\end{equation*}
	where $\vpmb:=[(\al^k)^\top,\;-1]^\top$, for $k=0,1,\dots,K-1$.
\end{lemma}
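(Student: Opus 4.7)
The plan is to adapt the proof of Lemma \ref{prop: discr positivity} essentially verbatim, with the single non-trivial modification being that the full-rank assumption on $\obsmat$ must be replaced by an argument exploiting the structure of the restricted basis $\{B^\mathcal{O}_\ell\}_{\ell=1}^{\mathcal{R}M}$. As before, I set $\widetilde{B}:=B^\mathcal{O}_{k+1}-\sum_{j=1}^{k}\al^k_j B^\mathcal{O}_j$ and observe that $\widetilde{B}\neq 0$ by linear independence of the first $k+1$ basis elements, so some column $\widetilde{\pmb{b}}_i$ of $\widetilde{B}$ is non-zero.

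The main obstacle is to show that this non-zero column is not annihilated by $\obsmat$, since otherwise the power-series argument of Lemma \ref{prop: discr positivity} collapses. The key observation is structural: by construction \eqref{eq:our_basis}, every basis element $B^\mathcal{O}_\ell$ with $\ell\leq\mathcal{R}M$ has the form $\vpmb_{j(\ell)}\epmb_{i(\ell)}^\top$ with $j(\ell)\leq\mathcal{R}$, so each column of $\widetilde{B}$ lies in $\spantxt\{\vpmb_1,\dots,\vpmb_{\mathcal{R}}\}$. From \eqref{eq:vj_1}-\eqref{eq:vj_2} and the fact that $\{\vpmb_j\}_{j=1}^{N}$ is a basis of $\mathbb{R}^N$, we get the direct-sum decomposition
\begin{equation*}
\mathbb{R}^N = \spantxt\{\vpmb_1,\dots,\vpmb_{\mathcal{R}}\}\oplus\kertxt\,\obsmat,
\end{equation*}
so any non-zero vector in $\spantxt\{\vpmb_1,\dots,\vpmb_{\mathcal{R}}\}$ lies outside $\kertxt\,\obsmat$. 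Applied to $\widetilde{\pmb{b}}_i$, this yields $\obsmat\widetilde{\pmb{b}}_i\neq 0$, hence there exists an index $j\in\{0,\dots,N-1\}$ with $CA^j\widetilde{\pmb{b}}_i\neq 0$. This is exactly what the full-observability hypothesis provided in Lemma \ref{prop: discr positivity}, and is the only place where the restriction $k<\mathcal{R}M$ is used (it guarantees that all indices $j(\ell)$ remain in $\{1,\dots,\mathcal{R}\}$).

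Once this is in place, the rest of the argument is a direct transcription of Lemma \ref{prop: discr positivity}: I define the piecewise-constant test control $\widetilde{\eps}\in E_{ad}$ by $\widetilde{\eps}(s)=0$ on $[0,\delta)$ and $\widetilde{\eps}(s)=\epmb_i$ on $[\delta,T]$, expand the matrix exponential in a power series, exchange sum and integral via dominated convergence, and obtain an analytic function of $\delta$ of the form $\sum_{j=0}^{\infty}\beta_j(\delta)CA^j\widetilde{\pmb{b}}_i$ which is not identically zero thanks to the preceding step. Choosing $\delta\in(0,T)$ away from its isolated zeros makes the integral $\int_0^T Ce^{(T-s)A}\widetilde{B}\widetilde{\eps}(s)\,ds$ non-zero, and since $\eps^{k+1}$ is a maximizer of \eqref{eq: discriminatory step} we conclude
\begin{equation*}
\scp{\vpmb,W_{[1:k+1,1:k+1]}(\eps^{k+1})\vpmb}\geq \norm{\int_0^T Ce^{(T-s)A}\widetilde{B}\widetilde{\eps}(s)\,ds}_2^2>0,
\end{equation*}
which is the claim.
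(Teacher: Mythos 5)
Your proposal is correct and follows essentially the same route as the paper: the paper's proof likewise reduces everything to the single observation that $\mathcal{O}_N(C,A)\big(B^\mathcal{O}_{k+1}-\sum_{j=1}^{k}\al^k_j B^\mathcal{O}_j\big)\neq 0$ and then invokes the argument of Lemma \ref{prop: discr positivity} verbatim. Your direct-sum justification $\mathbb{R}^N=\spantxt\{\vpmb_1,\dots,\vpmb_{\mathcal{R}}\}\oplus\kertxt\,\mathcal{O}_N(C,A)$, together with the fact that every column of $\widetilde{B}$ lies in $\spantxt\{\vpmb_1,\dots,\vpmb_{\mathcal{R}}\}$, simply spells out the detail the paper leaves implicit, and it is accurate.
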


\begin{proof}
Notice that, since the matrices $B^\mathcal{O}_1,\dots,B^\mathcal{O}_{\mathcal{R}M}$ 
are linearly independent and
defined as in \eqref{eq:our_basis}, we have that 
$\mathcal{O}_N(C,A)\Big(B^\mathcal{O}_{k+1}- \sum_{j=1}^{k}\al^k_j B^\mathcal{O}_j \Big)\neq 0$.

With this observation, the result can be proved exactly as Lemma \ref{prop: discr positivity}.
\end{proof}

Using Lemma \ref{prop: discr positivity 2}, we can
prove convergence for the GR Algorithm \ref{algo: main} in case the matrices 
$B_1,\dots,B_{\mathcal{R}M}$ defined in \eqref{eq:our_basis} are used.

\begin{theorem}[Convergence of the GR alg. for non-fully observable systems]\label{thm: main conv special basis}
	Let $(\eps^m)_{m=1}^{\mathcal{R}M}\subset E_{ad}$ be a family of controls generated by the 
	GR Algorithm \ref{algo: main} with $K\leq\mathcal{R}M$ and using the matrices $B^\mathcal{O}_1,\dots,B^\mathcal{O}_{\mathcal{R}M}$ 
	defined in \eqref{eq:our_basis}. Then the least-squares problem 
	\begin{equation}\label{eq: main problem improved}
	\min_{\al\in\mathbb{R}^{\mathcal{R}M}} \sum_{m=1}^{\mathcal{R}M}\norm{C\vp_T(\Bstar,\eps^m)-C\vp_T(B_{\mathcal{R}}(\al),\eps^m) }_2^2
	\end{equation}
	is uniquely solvable with $\al_j=\al_j^\star$ for $j=1,\ldots,\mathcal{R}M$.
\end{theorem}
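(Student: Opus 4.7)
The plan is to mirror the proof of Theorem \ref{thm: overall main conv}, but now working on the reduced space $\mathbb{R}^{\mathcal{R}M}$ dictated by Lemma \ref{lem:mainproblemlesscoefficients}. First, I would establish a reduced matrix-vector reformulation of \eqref{eq: main problem improved}: repeating the computation in the proof of Lemma \ref{lem: main problem terms of W} with summation only up to $\mathcal{R}M$, one obtains that \eqref{eq: main problem improved} is equivalent to
\begin{equation*}
\min_{\al\in\mathbb{R}^{\mathcal{R}M}}\;\scp{\al^\star_{[1:\mathcal{R}M]}-\al,\widehat{W}(\al^\star_{[1:\mathcal{R}M]}-\al)},
\end{equation*}
where now $\widehat{W}=\sum_{m=1}^{\mathcal{R}M}W(\eps^m)\in\mathbb{R}^{\mathcal{R}M\times \mathcal{R}M}$ is built from the matrices $B^\mathcal{O}_1,\dots,B^\mathcal{O}_{\mathcal{R}M}$. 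As before, $\widehat{W}$ is symmetric positive semi-definite as a sum of Gramians, so unique solvability reduces to showing that $\widehat{W}$ is in fact positive definite.

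Second, I would prove positive definiteness by induction on the iteration index $k=1,\dots,\mathcal{R}M$, replicating the argument used for Theorem \ref{thm: overall main conv} but invoking Lemma \ref{prop: discr positivity 2} in place of Lemma \ref{prop: discr positivity}. The base case $k=1$ follows by applying Lemma \ref{prop: discr positivity 2} with $\vpmb=[-1]$, which yields $[W(\eps^1)]_{1,1}>0$. For the inductive step, assume $\widehat{W}^k_{[1:k,1:k]}$ is positive definite. If $\widehat{W}^k_{[1:k+1,1:k+1]}$ is already positive definite, the sum $\widehat{W}^{k+1}_{[1:k+1,1:k+1]}=\widehat{W}^k_{[1:k+1,1:k+1]}+W(\eps^{k+1})_{[1:k+1,1:k+1]}$ remains positive definite since the second summand is positive semi-definite. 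Otherwise, positive definiteness of $\widehat{W}^k_{[1:k,1:k]}$ makes the fitting-step problem \eqref{eq: fitting-step terms of W} uniquely solvable, and Lemma \ref{lem: if A pos def then also next bigger A} identifies the one-dimensional kernel of $\widehat{W}^k_{[1:k+1,1:k+1]}$ with the span of $\vpmb=[(\al^k)^\top,-1]^\top$. Lemma \ref{prop: discr positivity 2} then delivers $\scp{\vpmb,[W(\eps^{k+1})]_{[1:k+1,1:k+1]}\vpmb}>0$, so adding $W(\eps^{k+1})_{[1:k+1,1:k+1]}$ restores positive definiteness of the $(k+1)\times(k+1)$ upper-left block.

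The only real subtlety, and the one that distinguishes this theorem from Theorem \ref{thm: overall main conv}, is making sure that Lemma \ref{prop: discr positivity 2} is indeed applicable at every step, that is, that the matrix $B^\mathcal{O}_{k+1}-\sum_{j=1}^k\al^k_jB^\mathcal{O}_j$ is not annihilated by $\mathcal{O}_N(C,A)$ for all $k\le\mathcal{R}M-1$. This is exactly the point encapsulated at the start of the proof of Lemma \ref{prop: discr positivity 2}: by construction \eqref{eq:our_basis}, the columns of $B^\mathcal{O}_1,\dots,B^\mathcal{O}_{\mathcal{R}M}$ lie in $\mathrm{span}\{\vpmb_1,\dots,\vpmb_\mathcal{R}\}$, which by \eqref{eq:vj_1}--\eqref{eq:vj_2} meets $\ker\mathcal{O}_N(C,A)$ only at the origin; combined with the linear independence of the first $\mathcal{R}M$ basis elements, this forces any nonzero linear combination of them to have at least one column outside $\ker\mathcal{O}_N(C,A)$. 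Once this is in place, the induction closes exactly as in Theorem \ref{thm: overall main conv} and the reduced problem \eqref{eq: main problem improved} is uniquely solved by the first $\mathcal{R}M$ components of $\al^\star$.
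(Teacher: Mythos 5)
Your proposal is correct and follows essentially the same route as the paper: the paper's proof simply repeats the argument of Theorem \ref{thm: overall main conv} with Lemma \ref{prop: discr positivity 2} replacing Lemma \ref{prop: discr positivity}, which is exactly your induction, and the "subtlety" you highlight (that no nonzero combination of $B^\mathcal{O}_1,\dots,B^\mathcal{O}_{\mathcal{R}M}$ is annihilated by $\mathcal{O}_N(C,A)$, since their columns lie in ${\rm span}\{\vpmb_1,\dots,\vpmb_{\mathcal{R}}\}$ which meets the kernel only at the origin) is precisely the observation the paper isolates at the start of the proof of Lemma \ref{prop: discr positivity 2}. Your reduced matrix-vector reformulation over $\mathbb{R}^{\mathcal{R}M}$ is the same computation as Lemma \ref{lem: main problem terms of W}, left implicit in the paper.
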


\begin{proof}
The proof is the same as that of Theorem \ref{thm: overall main conv}, 
where one should use Lemma \ref{prop: discr positivity 2} instead of Lemma \ref{prop: discr positivity}.
\end{proof}

Theorem \ref{thm: main conv special basis} allows us to prove the next corollary,
which characterizes the result of the GR algorithm when more than $\mathcal{R}M$
basis elements of \eqref{eq:our_basis} are used.

\begin{corollary}[More on the convergence for non-fully observable systems]\label{coro:booo}
	Let $(\eps^m)_{m=1}^K\subset E_{ad}$, with $K>\mathcal{R}M$, 
	be a family of controls generated by the GR Algorithm \ref{algo: main}
    using the matrices $B^\mathcal{O}_1,\dots,B^\mathcal{O}_{K}$ defined in \eqref{eq:our_basis}. 
    Then the set of all global minimum points for the
    least-squares problem 
	\begin{equation*}%\label{eq: main problem improved}
	\min_{\al\in\mathbb{R}^{K}} \sum_{m=1}^{K}\norm{C\vp_T(\Bstar,\eps^m)-C\vp_T(B_{\mathcal{R}}(\al),\eps^m) }_2^2
	\end{equation*}
	is given by
	$$\{ \al \in \mathbb{R}^K \, : \, \al_j=\al_j^\star, \, j=1,\ldots,\mathcal{R}M \}.$$
\end{corollary}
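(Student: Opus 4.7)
The plan is to combine the vanishing property of the basis vectors beyond index $\mathcal{R}M$ (exploited in Lemma \ref{lem:mainproblemlesscoefficients}) with the positive-definiteness already established in Theorem \ref{thm: main conv special basis}. The cost functional will turn out to depend only on the first $\mathcal{R}M$ coordinates of $\al$ and to be strictly convex in those coordinates, pinning them exactly to $\al^\star_1,\ldots,\al^\star_{\mathcal{R}M}$, while leaving the remaining coordinates completely free.

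First, I would reapply the Cayley--Hamilton argument used in Lemma \ref{lem:mainproblemlesscoefficients} to the basis \eqref{eq:our_basis}: for every $\ell\in\{\mathcal{R}M+1,\ldots,K\}$ the matrix $B^\mathcal{O}_\ell$ has its nontrivial column in $\kertxt\,\obsmat$, so $Ce^{(T-s)A}B^\mathcal{O}_\ell=0$ for every $s\in[0,T]$. Consequently $\gam_\ell(\eps^m)=0$ for all $\ell>\mathcal{R}M$ and $m=1,\dots,K$. Writing the cost as in Lemma \ref{lem: main problem terms of W}, one gets
\begin{equation*}
\sum_{m=1}^K\norm{C\vp_T(\Bstar,\eps^m)-C\vp_T(B_\mathcal{R}(\al),\eps^m)}_2^2 = \scp{\al^\star-\al,\widehat{W}(\al^\star-\al)},
\end{equation*}
where $\widehat{W}\in\mathbb{R}^{K\times K}$ has all rows and columns with index $>\mathcal{R}M$ identically zero. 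Hence the cost depends only on the subvector $(\al_1,\ldots,\al_{\mathcal{R}M})$ through the upper-left block $\widehat{W}_{[1:\mathcal{R}M,1:\mathcal{R}M]}$.

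Second, I would observe that the first $\mathcal{R}M$ controls produced by the GR algorithm in this corollary are identical to those produced in Theorem \ref{thm: main conv special basis}, because the algorithm's iterations $1,\ldots,\mathcal{R}M$ depend only on $B^\mathcal{O}_1,\ldots,B^\mathcal{O}_{\mathcal{R}M}$ and on $\eps^1,\ldots,\eps^{\mathcal{R}M-1}$. By Theorem \ref{thm: main conv special basis}, the partial matrix $\widehat{W}^{\mathcal{R}M}_{[1:\mathcal{R}M,1:\mathcal{R}M]}$ is already positive definite. Since each $W(\eps^m)$ is positive semi-definite, adding the further contributions $W(\eps^{\mathcal{R}M+1}),\ldots,W(\eps^K)$ only enlarges this block in the Loewner order, so $\widehat{W}_{[1:\mathcal{R}M,1:\mathcal{R}M]}$ remains positive definite.

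Combining the two steps, the cost is equal to $\scp{\vpmb,\widehat{W}_{[1:\mathcal{R}M,1:\mathcal{R}M]}\vpmb}$ with $\vpmb:=(\al^\star-\al)_{1:\mathcal{R}M}$, a strictly convex quadratic in $(\al_1,\ldots,\al_{\mathcal{R}M})$, and independent of $(\al_{\mathcal{R}M+1},\ldots,\al_K)$. It therefore attains its minimum value $0$ exactly on the set $\{\al\in\mathbb{R}^K:\al_j=\al_j^\star\text{ for }j=1,\dots,\mathcal{R}M\}$, which is the claim. The only delicate point, and in my view the main obstacle, is the matching of the first $\mathcal{R}M$ iterates of the two runs of the algorithm; this requires noting that the fitting and discriminatory sub-problems at iteration $k\leq\mathcal{R}M-1$ involve only $B^\mathcal{O}_1,\ldots,B^\mathcal{O}_{k+1}$, so the behaviour of the later basis elements $B^\mathcal{O}_{\mathcal{R}M+1},\ldots,B^\mathcal{O}_K$ is irrelevant during those iterations.
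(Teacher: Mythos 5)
Your proof is correct and follows essentially the same route as the paper: both use the kernel property $\mathcal{O}_N(C,A)B^\mathcal{O}_\ell=0$ for $\ell>\mathcal{R}M$ to conclude $\gam_\ell(\eps^m)=0$ (so $\widehat{W}$ vanishes outside its upper-left $\mathcal{R}M\times\mathcal{R}M$ block), invoke Theorem \ref{thm: main conv special basis} for positive definiteness of that block after the first $\mathcal{R}M$ controls, and conclude via the quadratic form of Lemma \ref{lem: main problem terms of W}. Your explicit remarks that the first $\mathcal{R}M$ iterates only involve $B^\mathcal{O}_1,\dots,B^\mathcal{O}_{\mathcal{R}M}$ and that the later positive semi-definite contributions preserve positive definiteness are points the paper uses implicitly, not a different argument.
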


\begin{proof}
Theorem \ref{thm: main conv special basis} (and
Theorem \ref{thm: overall main conv}) and its proof allow us to obtain that,
using the first $\mathcal{R}M$ controls generated by the GR algorithm, the matrix
$\widehat{W}^{\mathcal{R}M} \in \mathbb{R}^{K \times K}$ has a positive definite
upper-left submatrix $\widehat{W}^{\mathcal{R}M}_{[1:\mathcal{R}M,1:\mathcal{R}M]}$
and all the other entries $[\widehat{W}^{\mathcal{R}M}]_{\ell,j}$ are zero.
Indeed, recalling the vectors $\gam_k(\eps^m)$ defined 
in \eqref{eq: gamma_l,j}, for any $B^\mathcal{O}_k$ with $k\geq \mathcal{R}M+1$, we have
that $\mathcal{O}_N(C,A)B^\mathcal{O}_k=0$ and thus
\begin{equation*}
\gam_k(\eps^m) = \int_{0}^{T}Ce^{(T-s)A}B^\mathcal{O}_k\eps^m(s)ds = 0,
\end{equation*}
for any $T>0$ and any $m=1,\dots,\mathcal{R}M$. Similarly, the matrices
$W(\eps^m)$ for $m > \mathcal{R}M$ have the same structure, namely that their only nonzero
components can be the upper-left submatrices $[W(\eps^m)]_{[1:\mathcal{R}M,1:\mathcal{R}M]}$.
Therefore, the matrix $\widehat{W}=\widehat{W}^K$ has a positive definite
upper-left submatrix $\widehat{W}_{[1:\mathcal{R}M,1:\mathcal{R}M]}$, while all its
other entries are zero. Therefore, the result follows by Lemma \ref{lem: main problem terms of W}.
\end{proof}

\begin{remark}[More about the kernel of $\mathcal{O}_N(C,A)$ and identifiability]\label{rem:after_the_improved_convergence_analysis1}
Corollary \ref{coro:booo} guarantees that, if the basis $(B^\mathcal{O}_j)_{j=1}^K$ is used with $K>\mathcal{R}M$, 
then one can reconstruct exactly $\mathcal{R}M$
coefficients, while nothing can be said about the coefficients $\alpha_j$
for $j>\mathcal{R}M$. This is due to the structure of the matrix $\widehat{W}^{\mathcal{R}M}$,
which has a positive definite submatrix $\widehat{W}^{\mathcal{R}M}_{[1:\mathcal{R}M,1:\mathcal{R}M]}$
and is zero elsewhere (as discussed in the proof of Corollary \ref{coro:booo}).
%In fact, for any $k\geq \mathcal{R}M+1$ one has that $\mathcal{O}_N(C,A)B_k=0$ and thus
%\begin{equation*}
%C\vp_T(B_k,\eps)=Ce^{TA}\vp_0+\underbrace{\int_{0}^{T}Ce^{(T-s)A}B_k\eps(s)ds}_{=0}=Ce^{TA}\vp_0,
%\end{equation*}
%for any $T>0$ and $\eps$. This means that, the effect of $B_k$,
%for $k\geq \mathcal{R}M+1$, on the state $\vp$ can not be observed.
%Therefore, it is impossible to recover the corresponding coefficients $\al^\star_k$.
\end{remark}	

\begin{remark}[a priori error estimate]\label{rem:after_the_improved_convergence_analysis2}	
Let $\al^{approx}$ be the solution to \eqref{eq: main problem improved}. Then we get the a priori error estimate
\begin{equation*}
B^{\star}-B_{\mathcal{R}}(\al^{approx})=\sum_{j=\mathcal{R}M+1}^{NM}\al^{\star}_j B^\mathcal{O}_j.
\end{equation*}
\end{remark}	

\begin{remark}[Min-max problem]\label{rem:after_the_improved_convergence_analysis3}			
Following the same arguments of the proof of Lemma \ref{lem:mainproblemlesscoefficients},
one can show that the min-max problem \eqref{eq:minmaxlincoeff} is equivalent to
\begin{equation}\label{eq:minmaxcoeffimproved}
\min_{\al\in\mathbb{R}^{\mathcal{R}M}}\max_{\eps\in E_{ad}} \norm{C\vp_T(\Bstar,\eps)-C\vp_T(B(\al),\eps)}_2^2.
\end{equation}
Analogously to Remark \ref{rem:minmaxequivmainconv}, we can conclude that, 
using the matrices $B^\mathcal{O}_1,\dots,B^\mathcal{O}_{\mathcal{R}M}$ defined in \eqref{eq:our_basis},
problem \eqref{eq:minmaxcoeffimproved} is uniquely solvable with $\al_j=\alstar_j$ for $j=1,\ldots,\mathcal{R}M$.
\end{remark}

The results proved so far for a non-fully observable system are obtained for the special
basis $(B_j)_{j=1}^{MN}$ constructed in \eqref{eq:our_basis}. However, it is natural to ask:
\begin{itemize}\itemsep0em
\item Does it exist another basis that permits to reconstruct more than $\mathcal{R}M$ coefficients?
\item Can one always reconstruct at least $\mathcal{R}M$ coefficients for any arbitrarily chosen basis?
\end{itemize}

The answer to both questions are negative. 
The first one is given by Theorem \ref{prop:no_basis_recover_more_coefficients}.

\begin{theorem}[Maximal number of identifiable elements]\label{prop:no_basis_recover_more_coefficients}
Let the observability matrix $\mathcal{O}_N(C,A)$ be such that
${\rm rank} \, \mathcal{O}_N(C,A)=\mathcal{R}<N$.
There exists no basis of $\mathbb{R}^{N \times M}$ for which
one can exactly recover more than $\mathcal{R}M$ coefficients.
\end{theorem}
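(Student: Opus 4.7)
The plan is to show that, regardless of the basis chosen, the kernel of the observability matrix forces a fixed-dimensional ambiguity in any reconstruction, whose dimension $(N-\mathcal{R})M$ caps the number of recoverable coefficients at $NM-(N-\mathcal{R})M=\mathcal{R}M$.

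First I would fix an arbitrary basis $(B_j)_{j=1}^{NM}$ of $\mathbb{R}^{N\times M}$, let $L:\mathbb{R}^{NM}\to\mathbb{R}^{N\times M}$ denote the corresponding coordinate isomorphism $L(\al):=\sum_{j=1}^{NM}\al_j B_j$, and introduce the matrix kernel $\mathcal{K}:=\{B\in\mathbb{R}^{N\times M}:\obsmat B=0\}$. A straightforward column-by-column argument (each column of such $B$ must lie in $\kertxt\obsmat$, which has dimension $N-\mathcal{R}$) yields $\dim\mathcal{K}=(N-\mathcal{R})M$, and hence the coefficient ambiguity space $\mathcal{A}:=L^{-1}(\mathcal{K})$ also has dimension $(N-\mathcal{R})M$, independently of the basis.

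The second step is to reuse the Cayley-Hamilton identity from the proof of Lemma \ref{lem:mainproblemlesscoefficients} to observe that any perturbation of $B^\star$ lying in $\mathcal{K}$ produces zero output: $Ce^{(T-s)A}L(a)=0$ for all $s\in[0,T]$ whenever $a\in\mathcal{A}$. Consequently, the whole affine set $\al^\star+\mathcal{A}$ is contained in the set of global minimizers of the least-squares functional \eqref{eq: main problem}, for \emph{every} admissible choice of controls $(\eps^m)_{m=1}^{NM}$. An index $j$ is exactly recoverable only if the coordinate functional $\al\mapsto\al_j$ is constant on this set, i.e.\ only if it annihilates $\mathcal{A}$; the linearly independent coordinate functionals of this kind lie in the annihilator $\mathcal{A}^\circ$, whose dimension is $NM-\dim\mathcal{A}=\mathcal{R}M$. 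This yields the desired upper bound.

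The step that requires most care, I believe, is formalizing "exactly recover a coefficient" in the sense used implicitly by Corollary \ref{coro:booo}, namely that $\al_j$ takes the value $\al_j^\star$ on every global minimizer of the cost, for every admissible choice of controls. Once this basis-independent definition is fixed, everything reduces to a clean linear-algebra dimension count and no delicate analytic arguments are required beyond the Cayley-Hamilton reduction already established in Lemma \ref{lem:mainproblemlesscoefficients}. A minor additional check is that the lower bound $\dim\mathcal{A}\geq(N-\mathcal{R})M$ is truly unconditional in the controls, since it only uses that observations vanish identically along $\mathcal{K}$.
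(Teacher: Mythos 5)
Your proposal is correct, but it takes a somewhat different route from the paper. The paper fixes the special basis $\{B^\mathcal{O}_k\}$ of \eqref{eq:our_basis}, expands an arbitrary basis element $\widehat{B}$ in it, multiplies by $\obsmat$ so the last $(N-\mathcal{R})M$ terms drop out, and concludes that among the images $D_k=\obsmat\widehat{B}_k$ at most $\mathcal{R}M$ are linearly independent; hence, for at least $NM-\mathcal{R}M$ elements of the arbitrary basis there is a linear combination of the others producing identical observations for every control, which caps the number of reconstructible coefficients. You instead work basis-free: you identify the kernel $\mathcal{K}=\{B:\obsmat B=0\}$ of dimension $(N-\mathcal{R})M$, use the Cayley--Hamilton reduction of Lemma \ref{lem:mainproblemlesscoefficients} to show that the affine set $\al^\star+L^{-1}(\mathcal{K})$ consists of global minimizers for any choice of controls, and then bound the number of coordinate functionals that can be constant on this set by the dimension $\mathcal{R}M$ of the annihilator of $L^{-1}(\mathcal{K})$. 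Both arguments rest on the same fact --- the map $B\mapsto\obsmat B$ has rank $\mathcal{R}M$ --- but your version avoids routing the arbitrary basis through the constructed basis $B^\mathcal{O}$ and, by making the notion of ``exactly recoverable coefficient'' explicit (constancy of $\al\mapsto\al_j$ on the global-minimizer set for every admissible control family), it tightens a step the paper leaves somewhat informal; the paper's version, in exchange, ties the statement directly to the basis construction \eqref{eq:our_basis} and to the discriminatory-step machinery of Lemma \ref{prop: discr positivity} already in place.
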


\begin{proof}
	Consider the basis $\mathcal{B}=\{B^\mathcal{O}_k\}_{k=1}^{NM}\subset\mathbb{R}^{N\times M}$ 
	constructed as in \eqref{eq:our_basis}	and another arbitrarily chosen basis 
	$\widehat{\mathcal{B}}=\{\widehat{B}_k\}_{k=1}^{NM}\subset\mathbb{R}^{N\times M}$.
	Any element $\widehat{B}\in\widehat{\mathcal{B}}$ can be written as a linear combination of 
	the elements of $\mathcal{B}$, that is $\widehat{B}=\sum_{j=1}^{NM}\lambda_jB^\mathcal{O}_j$,
	for appropriate $\lambda_j\in\mathbb{R}$, $j=1,\dots,MN$. 
	Multiplying $\widehat{B}$ with $\mathcal{O}_N(C,A)$, we get
	\begin{align*}
	\mathcal{O}_N(C,A)\widehat{B}&=\mathcal{O}_N(C,A)\bigg[\sum_{j=1}^{NM}\lambda_jB^\mathcal{O}_j\bigg]
	=\sum_{j=1}^{NM}\lambda_j\mathcal{O}_N(C,A)B^\mathcal{O}_j
	=\sum_{j=1}^{\mathcal{R}M}\lambda_j\mathcal{O}_N(C,A)B^\mathcal{O}_j,
	\end{align*}
	where we used that $\mathcal{O}_N(C,A)B^\mathcal{O}_j=0$, for $j\in\{\mathcal{R}+1,\ldots,N \}$,
	to obtain the last equality. Now define the set $\mathcal{D}=\{D_k\}_{k=1}^{NM}$ as
	\begin{equation*}
	D_k:=\mathcal{O}_N(C,A)\widehat{B}_k,\quad k=1,\ldots,NM.
	\end{equation*}
	Hence, we can conclude that at most $\mathcal{R}M$ elements of $\mathcal{D}$ are linearly independent.
	Recalling the proof of Lemma \ref{prop: discr positivity} and Remark \ref{rem:after_the_improved_convergence_analysis1},
	this means that for $NM-\mathcal{R}M$ elements of $\widehat{\mathcal{B}}$ 
	there exists a linear combination of the other $\mathcal{R}M$ elements, 
	such that the observation at final time $T$ is identical for any control $\eps$.
	Therefore one can reconstruct at most $\mathcal{R}M$ coefficients for the basis $\widehat{\mathcal{B}}$.
\end{proof}

Let us now explain why the answer to the second question is also negative.
To do so, we provide the following examples, which show that
a wrong choice of a basis leads to inconclusive results.

\begin{example}[Wrong bases lead to inconclusive results]\label{example:bad}
	Consider a simple system with
	\begin{equation*}
	A=\begin{bmatrix}
	1&0\\0&1
	\end{bmatrix}, \;
	B^\star = \begin{bmatrix}
	1&1\\1&1
	\end{bmatrix}, \;
	C=\begin{bmatrix}
	1&0\\0&0
	\end{bmatrix},
	\end{equation*}	
	and the basis of $\mathbb{R}^{2 \times 2}$
	\begin{equation*}
	\widehat{B}_1 = \begin{bmatrix}
	1&0\\0&0
	\end{bmatrix},
	\widehat{B}_2 = \begin{bmatrix}
	1&0\\1&0
	\end{bmatrix},
	\widehat{B}_3 = \begin{bmatrix}
	0&1\\0&0
	\end{bmatrix},
	\widehat{B}_4 = \begin{bmatrix}
	0&1\\0&1
	\end{bmatrix}.
	\end{equation*}
	Notice that in this case the observability condition does not hold,
	since one can compute that $\mathcal{R}={\rm rank}\,\mathcal{O}_N(C,A)
	={\rm rank}\begin{small}\begin{bmatrix}
	1 & 0 & 1 & 0 \\
	0 & 0 & 0 & 0 \\
	\end{bmatrix}^\top\end{small}=1$.
	Clearly we have that
	\begin{equation*}
	B^\star = 0\cdot B_1+1\cdot B_2+0\cdot B_3+1\cdot B_4, \; \text{ (hence $\al^\star = [ 0 \, 1 \, 0 \, 1]^\top$)}.
	\end{equation*}	
	We can now compute for an arbitrarily chosen control $\eps \in E_{ad}$ that
	\begin{align*}
	C\vp_T(B^\star,\eps)-&C\vp_T(B(\al),\eps)=C\int_{0}^{T}e^{(T-s)A}B^\star\eps(s)ds-C\int_{0}^{T}e^{(T-s)A}B(\al)\eps(s)ds\\
	&=\int_{0}^{T}Ce^{(T-s)A}\Big(\begin{bmatrix}
	1&1\\1&1
	\end{bmatrix}-\begin{bmatrix}
	\al_1+\al_2&\al_3+\al_4\\\al_2&\al_4
	\end{bmatrix}\Big)\eps(s)ds\\
	&=\int_{0}^{T}\begin{bmatrix}
	1&0\\0&0
	\end{bmatrix}\begin{bmatrix}
	e^{T-s}&0\\0&e^{T-s}
	\end{bmatrix}\begin{bmatrix}
	1-(\al_1+\al_2)&1-(\al_3+\al_4)\\1-\al_2&1-\al_4
	\end{bmatrix}\eps(s)ds\\
	&=\int_{0}^{T}
	\begin{bmatrix}
	e^{T-s}(1-(\al_1+\al_2))&e^{T-s}(1-(\al_3+\al_4))\\
	0&0
	\end{bmatrix}\eps(s)ds,
	\end{align*}
	which is zero for any $\al=[\al_1\; \al_2\; \al_3\; \al_4]^\top \in\mathbb{R}^4$ with $\al_1+\al_2=1$ and $\al_3+\al_4=1$ (for any control $\eps$).
	This means that any $\al=[\al_1\; \al_2\; \al_3\; \al_4]$ with $\al_1+\al_2=1$ and $\al_3+\al_4=1$ solves the least-squares
	problem \eqref{eq: main problem}, independently on the control functions $\eps_1,\dots,\eps_4$.
	Since the online least-square problem has then infinitely many solutions,\footnote{Notice that these
	solutions are also solution to the min-max problem \eqref{eq:minmaxlinear}-\eqref{eq:minmaxlincoeff}.} 
	one cannot conclude anything about the quality of a computed solution,
	which has the form
	\begin{equation*}
	\widehat{B}^{approx}=\begin{bmatrix}
	1&1\\\al_2&\al_4
	\end{bmatrix},
	\end{equation*}
	leading to the error
	\begin{equation*}
	\norm{B^{\star}-B_{\mathcal{R}}(\al^{approx})}_F^2=(1-\al_2)^2+(1-\al_4)^2,
	\end{equation*}
	which can be arbitrarily large (here $\norm{\,\cdot\,}_F$ denotes the Frobenius norm).
	Even if one would by chance guess the right coefficients (in this case $\al_2=1, \al_4=1$)
	there would be no way to verify it, since their effect is not observable.
	Notice also that, even if the entries $\widehat{B}^{approx}_{1,1}$ and $\widehat{B}^{approx}_{1,2}$ are correct,
	it is not possible to certify this or to associate these correct entries to some precise elements of the chosen basis.
	This example shows that for an arbitrarily chosen basis, one can not conclude anything about the quality of the computed 
	coefficients or	the difference between $B(\al)$ and $B^{\star}$. 
\end{example}

\begin{example}[Good bases lead to certified results]
Consider the same system of Example \ref{example:bad}, but now let us
use the SVD of the observability matrix,
	\begin{equation*}
	\mathcal{O}_2(C,A)=\begin{bmatrix}
	1&0\\0&0\\1&0\\0&0
	\end{bmatrix} = 
	\begin{bmatrix}
	\frac{\sqrt{2}}{2}&0&-\frac{\sqrt{2}}{2}&0\\
	0&1&0&0\\
	\frac{\sqrt{2}}{2}&0&\frac{\sqrt{2}}{2}&0\\
	0&0&0&1
	\end{bmatrix}
	\begin{bmatrix}
	\sqrt{2}&0\\0&0\\0&0\\0&0
	\end{bmatrix}
	\begin{bmatrix}
	1&0\\0&1
	\end{bmatrix}=U\Sigma V^\top,
	\end{equation*}
	which gives
	\begin{equation*}
	\vpmb_1 = \begin{bmatrix}
	1\\0
	\end{bmatrix}\notin \ker \obsmat,\quad\vpmb_2 = \begin{bmatrix}
	0\\1
	\end{bmatrix}\in \ker \obsmat,
	\end{equation*}
	leading to the basis (constructed as in \eqref{eq:our_basis})
	\begin{equation*}
	B_1 = \begin{bmatrix}
	1&0\\0&0
	\end{bmatrix},
	B_2 = \begin{bmatrix}
	0&1\\0&0
	\end{bmatrix},
	B_3 = \begin{bmatrix}
	0&0\\1&0
	\end{bmatrix},
	B_4 = \begin{bmatrix}
	0&0\\0&1
	\end{bmatrix}.
	\end{equation*}
	In this case, we have $\al^\star=[1\;1\;1\;1]^\top$. 
	Since the GR algorithm considers only the first two basis elements,
	one gets the final result
	\begin{equation*}
	B^{approx}=\begin{bmatrix}
	1&1\\0&0
	\end{bmatrix}.
	\end{equation*}
	Similarly to Example \ref{example:bad}, the two entries 
	$\widehat{B}^{approx}_{1,1}$ and $\widehat{B}^{approx}_{1,2}$ are correct,
	but now this is guaranteed by Theorem \ref{thm: main conv special basis}. 
	Therefore, in this case, the results obtained are accompanied by precise 
	information on their correctness (guaranteed by theoretical results).
\end{example}

These examples show clearly that without an a priori knowledge about the observability of the system 
(and hence about the ``quality'' of the basis), the GR algorithm leads to inconclusive results.
Even though we have presented in this section a way to construct 
a basis which permits a precise analysis of the obtained results,
this is generally not possible for nonlinear problems, like the Hamiltonian reconstruction 
problem described in Section \ref{sec:bilinear_setting}.
Is it then possible to modify the GR algorithm in order to distinguish automatically between
``good'' and ``bad'' elements of a given set of matrices?
The answer is given in Section \ref{sec:impro}, where we first introduce an improved GR algorithm 
for linear-quadratic problems and then extend it to nonlinear problems.

%\begin{remark}
%	\begin{enumerate}
%		\item It is important to note that in the last part we only discussed about quality of coefficients in the respective basis. The difference in observation at final time $T$ between the approximated matrix and the true one is the same for any basis, since in any basis there are exactly $\mathcal{R}M$ matrices that are linearly independent after multiplication with the observability matrix.
%		
%		\item This improvement can be very impactful in situations where the system is not very observable, i.e. the observability rank is very low ($\mathcal{R}\ll N$), and ``online'' experiments are costly.
%		In these cases using the improved algorithm reduces the number of controls generated and therefore also the number of experiments needed from $NM$ to $\mathcal{R}M$.\\
%		This is already noticeable in a three-dimensional case ($N=M=3$) with an observability rank of $\mathcal{R}=1$, where the improvement reduces the number of controls by $\approx 67\%$ from 9 to only 3. We will discuss numerical examples in Section TODO.
%	\end{enumerate}
%\end{remark}

%%%%%%%%%%%%%%%%%%%%%%%%%%%%%%%%%%%%%%%%%%%%%%%%%%%%%%%%%%%%%%%%%%%%%%%%%%%%%%%%%%%%%
\section{Improvements of the algorithm}\label{sec:impro}

The previous section ended with two examples showing clearly that a wrong choice of the
basis elements and their ordering can lead to inconclusive results.
Even though this issue can be avoided for linear problems by using the observability matrix
(and constructing a basis as in \eqref{eq:our_basis}), this strategy does generally not apply to
nonlinear problems. For this reason, we introduce an optimized GR (OGR) algorithm,
in which the basis elements are selected during the iterations (in a greedy fashion)
as the ones that maximize the discrimination functions. In particular, we introduce
in Section \ref{sec:impro_greedy} the OGR algorithm for linear-quadratic problems and show by numerical 
experiments that this leads to an automatic appropriate selection of the basis elements, 
even though the observability matrix is not considered at all.
Once the new algorithm is introduced for linear systems, it is then natural to extend it to
nonlinear problems. We consider this extension in Section \ref{sec:impro_com} for 
Hamiltonian reconstruction problems and show the efficiency
of our new OGR algorithm by direct numerical experiments.

\subsection{Optimized greedy reconstruction for linear-quadratic problems}\label{sec:impro_greedy}

Consider an arbitrary set of linearly independent matrices $(B_j)_{j=1}^K \subset \mathbb{R}^{N \times M}$. 
We wish to modify the GR Algorithm \ref{algo: main} in order to choose at every iteration
one element $B_j$ which leads to a control function capable of improving the rank of the matrix
$\widehat{W}^k_{[1:k+1,1:k+1]}$. The idea is to replace the sweeping process of the GR 
Algorithm \ref{algo: main} with a more robust and parallel testing of all the matrices.
At each iteration, the element associated with the maximal discriminating value is chosen 
and removed from the set $(B_j)_{j=1}^K$, while the corresponding control function is added to the set 
of already computed control functions. 
Therefore, the dimension of the set $(B_j)_{j=1}^K$ reduces by one at each iteration and 
the algorithm is stopped if either all the $K$ matrices where chosen or as soon none of the remaining ones
can be discriminated by the others. This idea lead to the OGR Algorithm \ref{alg: greedy improvement}.

\begin{algorithm}[t]
	\caption{Optimized Greedy Reconstruction Algorithm (linear-quadratic case)}
	\begin{small}
	\begin{algorithmic}[1]\label{alg: greedy improvement}
		\REQUIRE A set of $K$ linearly independent matrices $\mathcal{B}=(B_1,\ldots,B_K)$ 
		and a tolerance tol$>0$.
		\STATE Solve the initialization problem
		$$\max_{\ell\in\{1,\ldots,K \}}\max_{\eps \in E_{ad}} \norm{C\vp_T(B_\ell,\eps)-C\vp_T(0,0)}_2^2$$
		which gives the field $\eps^1$ and the index $\ell_1$.
		\IF {$\norm{C\vp_T(B_{\ell_1},\eps^1)-C\vp_T(0,0)}_2^2< {\rm tol}$}
		\STATE {\bf stop}.
		\ENDIF
		\STATE Switch $B_1$ and $B_{\ell_1}$ in $\mathcal{B}$ and set $k=1$.
		\WHILE{ $k\leq K-1$ }
		\FOR{$\ell=k+1,\ldots,K$}
		\STATE \underline{Fitting step}: Find $(\al^{\ell}_j)_{j=1,\dots,k}$ that solve the problem
		\begin{equation}\label{eq: greedy fitting step}
		\min\limits_{\al \in\mathbb{R}^k}\sum_{m=1}^{k}\norm{C\vp_T(B_{\ell},\eps^m)-C\vp_T(B(\al),\eps^m) }_2^2.
		\end{equation}
		\ENDFOR
		\STATE \underline{Extended discriminatory step}: Find $\eps^{k+1}$ and $\ell_{k+1}$ that solve the problem
		\begin{equation}\label{eq: greedy discriminatory step}
		\max_{\ell\in\{k+1,\ldots,K \}}\max\limits_{\eps\in E_{ad}}\norm{C\vp_T(B_\ell,\eps)-C\vp_T(B(\al^\ell),\eps) }_2^2.
		\end{equation}
		\IF {$\norm{C\vp_T(B_{\ell_{k+1}},\eps^{k+1})-C\vp_T(B(\al^{\ell_{k}}),\eps^{k+1})}_2^2< {\rm tol}$}
		\STATE {\bf stop} and return the selected $(B_j)_{j=1}^k$ and the computed $(\eps^m)_{m=1}^k$.
		\ENDIF
		\STATE Switch $B_{k+1}$ and $B_{\ell_{k+1}}$ in $\mathcal{B}$ and update $k \leftarrow k+1$.
		\ENDWHILE
	\end{algorithmic}
	\end{small}
\end{algorithm}

In this algorithm, we clearly extended the greedy character of the original GR algorithm 
to the choice of the next basis element. At each iteration, we consider all remaining basis 
elements as the potential next one. We select the one which yields the largest function 
value in the respective discrimination (maximization) step. 
In other words, one computes the basis element for which one can split the observation the most 
from all previous basis elements. 
It is important to remark that, at each iteration one solves several fitting-step problems
and several discriminatory-step problems. However, their solving can be performed 
in parallel, since the single problems are independent one from another.

Notice that a selected element $B_{k+1}$ will not be linearly dependent on previously
chosen elements (after multiplication with the observability matrix). This is proven in the next theorem, which also motivates the stopping criterion used in the 
steps 2-4 and 11-13 of the algorithm.

\begin{theorem}[Linearly independence of selected basis elements]\label{thm:lin_ind}
Assume that the OGR Algorithm \ref{alg: greedy improvement} selected already $k$
linearly independent matrices $B_j$, $j=1,\dots,k$. 
At iteration $k+1$, the new selected matrix $B_{k+1}$ is such that
$\mathcal{O}_N(C,A)B_{k+1}$ is linearly independent
from the matrices $\mathcal{O}_N(C,A)B_j$, $j=1,\dots,k$, if and only if 
$$\norm{C\vp_T(B_{\ell_{k+1}},\eps^{k+1})-C\vp_T(B(\al^{\ell_{k}}),\eps^{k+1})}_2^2>0.$$
\end{theorem}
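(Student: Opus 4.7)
The plan is to reduce both implications to a single fact: via the Cayley--Hamilton identity already exploited in the proof of Lemma~\ref{lem:mainproblemlesscoefficients}, the control-to-observation map $\eps\mapsto\int_0^T Ce^{(T-s)A}\widetilde B\,\eps(s)\,ds$ depends on the matrix $\widetilde B$ only through $\mathcal{O}_N(C,A)\widetilde B$. Setting $\widetilde B:=B_{k+1}-\sum_{j=1}^{k}\al_j^{\ell_{k+1}}B_j$, I would therefore translate the linear (in)dependence of $\mathcal{O}_N(C,A)B_{k+1}$ from $\{\mathcal{O}_N(C,A)B_j\}_{j=1}^{k}$ into the (non)vanishing of $\mathcal{O}_N(C,A)\widetilde B$, and then of the corresponding observation map.

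For the only-if direction, assume $\mathcal{O}_N(C,A)B_{k+1}$ is linearly independent of $\{\mathcal{O}_N(C,A)B_j\}_{j=1}^{k}$. Then for every $\al\in\mathbb{R}^k$, including the fit-step solution $\al^{\ell_{k+1}}$, one has $\mathcal{O}_N(C,A)\widetilde B\neq 0$ (otherwise $\mathcal{O}_N(C,A)B_{k+1}$ would be a combination of the previous images). Reusing verbatim the piecewise-constant control construction and the analyticity argument from the proof of Lemma~\ref{prop: discr positivity}, I would exhibit a control $\widetilde\eps\in E_{ad}$ such that $\int_0^T Ce^{(T-s)A}\widetilde B\,\widetilde\eps(s)\,ds\neq 0$. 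By optimality of $\eps^{k+1}$ in the extended discriminatory step \eqref{eq: greedy discriminatory step}, the asserted norm is strictly positive.

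For the if direction, I would argue the contrapositive. Suppose $\mathcal{O}_N(C,A)B_{k+1}=\sum_{j=1}^{k}\beta_j\mathcal{O}_N(C,A)B_j$ for some $\beta\in\mathbb{R}^k$. By Cayley--Hamilton, $Ce^{(T-s)A}\bigl(B_{k+1}-\sum_{j=1}^{k}\beta_jB_j\bigr)=0$ for all $s\in[0,T]$, so the fit-step objective \eqref{eq: greedy fitting step} (with index $\ell=k+1$) vanishes at $\al=\beta$ for \emph{every} control, hence in particular at the controls $\eps^1,\ldots,\eps^k$. Therefore $\beta$ is a global minimizer of the fit-step quadratic. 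Since the algorithm successfully completed $k$ iterations without triggering the stopping criterion, an induction paralleling Theorem~\ref{thm: overall main conv} (each previous discriminating value exceeded tol and thus was strictly positive, and Lemma~\ref{lem: if A pos def then also next bigger A} propagates positive-definiteness of the upper-left block of $\widehat{W}$) yields that $\widehat{W}^k_{[1:k,1:k]}$ is positive definite; by Lemma~\ref{lem:ALGinW} the fit-step minimizer is unique, so $\al^{\ell_{k+1}}=\beta$. Then for every $\eps$ the discriminating integrand is identically zero, so in particular the value at $\eps^{k+1}$ is zero.

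The main obstacle I anticipate is precisely this last bookkeeping step: the theorem only hypothesizes that the previously selected $B_1,\ldots,B_k$ are linearly independent, yet the if direction needs the strictly stronger fact that $\widehat{W}^k_{[1:k,1:k]}$ is positive definite (so that the fit step has a unique minimizer). I would handle this by spelling out that the OGR algorithm can only have reached iteration $k+1$ if at iterations $1,\ldots,k$ the tolerance-based stopping criterion was not triggered, and then invoke the same induction as in Theorem~\ref{thm: overall main conv} combined with Lemma~\ref{prop: discr positivity}, thereby upgrading ``$k$ independent matrices were selected'' to ``$\widehat{W}^k_{[1:k,1:k]}$ is positive definite.'' Once this link is in place, the two directions above close the proof.
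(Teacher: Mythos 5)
Your proof is correct and follows essentially the same route as the paper's: the forward direction reuses the observability/analyticity construction of Lemma \ref{prop: discr positivity} (as in Lemma \ref{prop: discr positivity 2}) together with optimality of $\eps^{k+1}$, and the backward direction is the same contraposition via a zero-cost fitting solution whose discriminatory value vanishes for every control. The only difference is that you make explicit a step the paper leaves implicit, namely that the stopping criterion plus the induction of Theorem \ref{thm: overall main conv} gives positive definiteness of $\widehat{W}^k_{[1:k,1:k]}$, hence uniqueness of the fitting-step minimizer, which is indeed needed to identify $\al^{\ell_{k+1}}$ with the annihilating coefficient vector.
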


\begin{proof}
If the matrix $\mathcal{O}_N(C,A)B_{k+1}$ is linearly independent
from the other matrices $\mathcal{O}_N(C,A)B_j$, $j=1,\dots,k$, then one can show
as in the proof of Lemma \ref{prop: discr positivity 2} that 
$$\norm{C\vp_T(B_{\ell_{k+1}},\eps^{k+1})-C\vp_T(B(\al^{\ell_{k}}),\eps^{k+1})}_2^2>0.$$

Now, we prove the other implication by contraposition.
Assume that there exists a vector $\al\in\mathbb{R}^k$ such that
$\obsmat (B_{k+1}-\sum_{j=1}^k\al_jB_j)=0$ holds. This vector $\al$ is a solution 
of the fitting step problem with cost-function value equal to zero. 
However, the corresponding cost function of the discriminatory-step problem
\eqref{eq: greedy discriminatory step}
results to be zero for any control function $\eps$. 
The result follows by contraposition.
\end{proof}

Theorem \ref{thm:lin_ind} shows exactly that 
the OGR algorithm manages to identify among the elements of the given set $(B_j)_{j=1}^K$ 
the ones that do not lie in the kernel of $\mathcal{O}_N(C,A)$.
For instance, let us consider again the system of Example \ref{example:bad}, 
for which we have shown that the GR algorithm leads to inconclusive results. 
If we use instead the OGR Algorithm \ref{alg: greedy improvement},
this performs two iterations and selects only two basis elements,
one among $\widehat{B}_1$ and $\widehat{B}_2$ and the other among
$\widehat{B}_3$ and $\widehat{B}_4$. This can be shown by performing
calculations similar to the ones of Example \ref{example:bad}.
In particular, in the initialization step the four matrices produce the 
same cost function value. Hence, any of them can be selected by the algorithm.
Assume that the element $\widehat{B}_1$ is picked (hence $\ell_1=1$)
and consider the first iteration of the algorithm ($k=1$).
At the fitting step the algorithm computes a coefficient $\al^2_1=1$ for $\widehat{B}_2$,
and some coefficients $\al^3_1$ and $\al^4_1$ corresponding to $\widehat{B}_3$
and $\widehat{B}_4$. Now, $\al^2_1=1$ leads to a cost function
of the discriminatory step which is zero for any control functions, while
for $\al^3_1$ and $\al^4_1$ there exist a control function leading to a non-zero value
of the discriminatory cost.
Therefore, the algorithm selects either $\widehat{B}_3$ or $\widehat{B}_4$. 
Let us assume that $\widehat{B}_4$ is picked ($\ell_2=4$) and hence the two 
elements $\widehat{B}_2$ and $\widehat{B}_4$ are switched.
In the fitting step of the second iteration ($k=2$), the algorithm
computes $\al^3=[\, 0 \, , \, 1 \,]^\top$ and $\al^4=[\, 1 \, , \, 0 \,]^\top$.
Both of these two vectors lead to a discriminatory cost that is
zero for any control. Hence, since the discriminatory step does
not find any positive function value, the algorithm stops and returns
$\widehat{B}_{\ell_1}=\widehat{B}_1$ and $\widehat{B}_{\ell_2}=\widehat{B}_4$
and the corresponding controls. 
If one uses the two selected basis elements and the corresponding control functions
in the online phase, then one obtains the result $\al = [ \, 1 \, , \, 1 \, ]^\top$,
which is not the exact solution shown in Example \ref{example:bad}.
This is due to the non-full observability of the system, which
implies that $\mathcal{O}_N(C,A)\widehat{B}_1 = \mathcal{O}_N(C,A)\widehat{B}_2$
and $\mathcal{O}_N(C,A)\widehat{B}_3 = \mathcal{O}_N(C,A)\widehat{B}_4$. This means
that the observations generated by the elements $\widehat{B}_1$ and $\widehat{B}_3$ 
cannot be distinguished by the ones created by $\widehat{B}_2$ and $\widehat{B}_4$. 
The non-full observability of the system cannot be
overcome by any numerical strategy. The OGR algorithm can nevertheless identify
automatically all the observable degrees of freedom of the considered system.

Let us now demonstrate the efficiency of our new OGR algorithm by direct numerical experiments.
We consider an experiment with two randomly chosen $N \times N$ full-rank real
matrices $A$ and $C$ with $N=10$. 
The unknown $B^\star$ is a randomly chosen real $N \times N$  matrix.
In this case the system is fully observable, nevertheless we construct the basis elements
to be used in the GR and OGR algorithm as in \eqref{eq:our_basis} (by an SVD of the observability matrix), 
but we order the elements randomly. We then run the GR Algorithm \ref{algo: main}
and compute the rank of the matrix $\widehat{W}^k$ at every iteration $k$.
This leads to the results shown in Figure \ref{fig:rank} by the blue curve.
\begin{figure}
\centering
\includegraphics[scale=0.38]{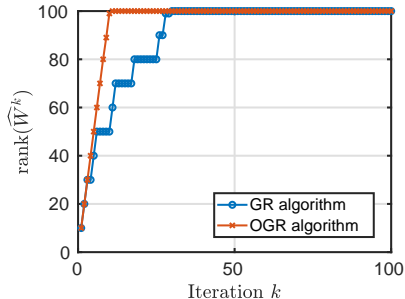}
\caption{Rank of the matrix $\widehat{W}^k$ corresponding to the GR algorithm (blue curve)
and OGR algorithm (red curve) for a fully observable system.
Both algorithms make use of a basis constructed as in \eqref{eq:our_basis}.}
\label{fig:rank}
\end{figure}
The rank increases monotonically during the iterations and becomes full after about 30 iterations.
However, the curve is not strictly monotonically increasing since the rank does not increase 
at each iteration. If we repeat the same experiment (with the same matrices) using
the OGR Algorithm \ref{alg: greedy improvement}, we obtain the red curve in Figure 
\ref{fig:rank}. This curve is strictly monotonically increasing in the first part
and becomes constant only once the rank has become full. In particular, at each iteration
the rank increases by 10 and the OGR algorithm could be in principle stopped much earlier
than the original GR algorithm, and much less control functions (hence laboratory experiments)
are needed to fully reconstruct the unknown operator $B^\star$.
This experiment clearly shows the high potential of the OGR
algorithm, which is capable to choose among the elements $B_1,\dots,B_K$ in an optimized fashion.

Let us conclude this section with two important observations.
First, the improvement proposed in Algorithm \ref{alg: greedy improvement} allows one
to even enrich the set $(B_j)_{j=1}^{K}$ used as input in Algorithm \ref{alg: greedy improvement}
with other new elements that can be linearly 
dependent on $B_1,\ldots,B_K$. In this case, if we denote by  $(B_j)_{j=1}^{\widetilde{K}}$,
for $\widetilde{K}>K$, the enriched set, then Theorem \ref{thm:lin_ind} guarantees that 
the OGR algorithm will automatically pick some elements of the enriched set
$(B_j)_{j=1}^{\widetilde{K}}$, such that  $\mathcal{O}_N(C,A)B_j$ are linearly independent for
all selected $B_j$. Hence, the corresponding discriminatory
cost-function values will be strictly positive.
Second, the OGR Algorithm can be extended to more general nonlinear reconstruction problems, 
and we propose in Section \ref{sec:impro_com} an efficient extension for the Hamiltonian reconstruction
problem described in Section \ref{sec:bilinear_setting} at the beginning of this paper.

\subsection{Optimized greedy reconstruction for non-linear problems}\label{sec:impro_com}

The extension of the OGR Algorithm \ref{alg: greedy improvement} to the nonlinear
Hamiltonian reconstruction problem of Section \ref{sec:bilinear_setting} is formally
rather straightforward and given by Algorithm \ref{alg: greedy improvement Hamilt}.
\begin{algorithm}[t]
	\caption{Optimized Greedy Reconstruction Algorithm (Hamiltonian case)}
	\begin{small}
	\begin{algorithmic}[1]\label{alg: greedy improvement Hamilt}
		\REQUIRE A set of $K$ matrices $\mathcal{B}_\mu=(\mu_\ell)_{\ell=1,\dots,K}$
		and a tolerance tol$>0$.
		\STATE Solve the initialization problem
		\begin{equation}\label{eq:bilinear_initialization_regularized}
		 \max_{n\in\{1,\ldots,K \}}\max_{\epsilon \in L^2} \vert
		 \varphi(\mu_n,\epsilon)-\varphi(0,0)
		 \vert^2.   
		\end{equation}
		which gives the field $\epsilon^1$ and the index $\ell_1$.
		\IF {$\vert
		 \varphi(\mu_{\ell_1},\epsilon^1)-\varphi(0,0)
		 \vert^2< {\rm tol}$}
		\STATE {\bf stop}.
		\ENDIF
		\STATE Switch $\mu_1$ and $\mu_{\ell_1}$ in $\mathcal{B}_\mu$ and set $k=1$.
		\WHILE{ $k\leq K-1$ }
		\FOR{$\ell=k+1,\ldots,K$}
		\STATE \underline{Fitting step}: Find $(\al^{\ell}_j)_{j=1,\dots,k}$ that solve the problem
		\begin{equation}\label{eq: greedy fitting step Ham}
		\min\limits_{\al \in\mathbb{R}^k}\sum_{m=1}^{k}|\varphi(\mu_{k+1},\epsilon^m)-\varphi(\mu^k(\al),\epsilon^m) |^2.
		\end{equation}
		\ENDFOR
		\STATE \underline{Extended discriminatory step}: Find $\epsilon^{k+1}$ and $\ell_{k+1}$ that solve the problem
		\begin{equation}\label{eq: greedy discriminatory step Ham}
		\max_{\ell\in\{k+1,\ldots,K \}}\max\limits_{\epsilon\in L^2}|\varphi(\mu_{k+1},\epsilon)-\varphi(\mu^k(\al^k),\epsilon) |^2.
		\end{equation}
		\IF {$|\varphi(\mu_{\ell_{k+1}},\epsilon^{k+1})-\varphi(\mu^k(\al^{\ell_k}),\epsilon^{k+1}) |^2< {\rm tol}$}
		\STATE {\bf stop} and return the selected $(\mu_j)_{j=1}^k$ and the computed $(\epsilon^m)_{m=1}^k$.
		\ENDIF
		\STATE Switch $\mu_{k+1}$ and $\mu_{\ell_{k+1}}$ in $\mathcal{B}_\mu$ and update $k \leftarrow k+1$.
		\ENDWHILE
	\end{algorithmic}
	\end{small}
\end{algorithm}
However, a few more computational aspects must be discussed.
First, the maximization problems characterizing the initialization step and the discriminatory 
steps are nonlinear optimal control problems that we solve numerically by the monotonic
scheme discussed in \cite{monschemes}; see also \cite{monotonic_analysis,madaysalomon,LibroQuantum,monschemes} 
and references therein. Second, the fitting step problems are highly nonlinear minimization problems
having generally several local minima. Since not all local minima correspond to an effective
defect (rank deficiency in the linear-quadratic case) to be compensated, every fitting-step problem
is solved multiple times using different randomly chosen initializations. The solution corresponding
to the smallest functional value is then chosen. Each fitting-step problem is solved by
a BFGS descent-direction method. Third, all optimization problems that are solved in the fitting steps
and in the discriminatory steps are independent one from another. Therefore, they can be solved in parallel as in the linear case.

Let us now show the efficiency of the OGR Algorithm \ref{alg: greedy improvement Hamilt} by direct
numerical experiments. We consider the same test case as in \cite{madaysalomon}, where the 
internal Hamiltonian and the unknown (randomly generated) dipole moment matrix are
\begin{equation*}
    H = 10^{-2}
    \begin{footnotesize}
    \begin{bmatrix}
    1&0&0\\
    0&2&0\\
    0&0&4
    \end{bmatrix}
    \end{footnotesize}, \;
    \mu^\star = 
    \begin{footnotesize}
    \begin{bmatrix}
    3.3617  &  3.4347  &  0.8416 \\
    3.4347  &  3.7763  &  4.7552 \\
    0.8416  &  4.7552  &  4.4226 \\
    \end{bmatrix}
    \end{footnotesize}.
\end{equation*}
The final time is $T=4000\pi$.
The states $\psi_0$ and $\psi_1$ are

\begin{equation*}
    \psi_0=\begin{bmatrix}1 & 0 & 0\end{bmatrix}^\top
    ,\quad
    \psi_1=\begin{bmatrix}0 &0 &1\end{bmatrix}^\top.
\end{equation*}

Now, we perform the following experiment. 
Since the unknown $\mu^\star$ is a $3\times 3$ symmetric matrix, 
we choose for the set $\mathcal{B}_\mu$
the following $K=6$ linearly independent canonical matrices

\vspace*{-3mm}
\begin{equation*}
\begin{small}
\begingroup % keep the change local
\setlength\arraycolsep{2.4pt}
\begin{bmatrix}
1 & 0 & 0 \\
0 & 0 & 0 \\
0 & 0 & 0 \\
\end{bmatrix},
\begin{bmatrix}
0 & 0 & 0 \\
0 & 1 & 0 \\
0 & 0 & 0 \\
\end{bmatrix},
\begin{bmatrix}
0 & 0 & 0 \\
0 & 0 & 0 \\
0 & 0 & 1 \\
\end{bmatrix},
\begin{bmatrix}
0 & 1 & 0 \\
1 & 0 & 0 \\
0 & 0 & 0 \\
\end{bmatrix},
\begin{bmatrix}
0 & 0 & 1 \\
0 & 0 & 0 \\
1 & 0 & 0 \\
\end{bmatrix},
\begin{bmatrix}
0 & 0 & 0 \\
0 & 0 & 1 \\
0 & 1 & 0 \\
\end{bmatrix},
\endgroup
\end{small}
\end{equation*}

\noindent
which form a basis for ${\rm Sym}(3)$,
and compute 6 control functions by the OGR Algorithm \ref{alg: greedy improvement Hamilt}.
Once these functions are obtained, one must reconstruct the unknown true dipole matrix by solving
the online nonlinear least-squares problem \eqref{eq:bilinear_main_problem}.
To do so, we use the standard MATLAB function \texttt{fminunc} (a BFGS descent-direction
minimization algorithm) initialized by a randomly chosen vector.
To test the robustness of the control functions computed by the 
OGR Algorithm \ref{alg: greedy improvement Hamilt}, we consider a six-dimensional hypercube 
centered in the global minimum point $\al^\star$ and given radius $r$, and
repeat the minimization for 1000 initialization vectors randomly chosen in this hypercube. 
We then count the number of times that the optimization algorithm converges to
the global solution $\mu^\star=\mu(\al^\star)$ up to a tolerance of 0.005
(half of the smallest considered radius).
Repeating this experiment for different values of the radius $r$ of the hypercube,
we obtain the results reported in the first row of Table \ref{tab:1}.

\begin{table}[!ht]
\centering
\begin{tabular}{ l|c|c|c|c|c|c } 
 \hline
 Hypercube radius $r$ & 0.01 & 0.10 & 0.25 & 0.50 & 0.75 & 1.00 \\ \hline
 GR (canonical basis) & 23 & 0 & 0 & 0 & 0 & 0 \\ 
 GR (random basis) & 431 & 43 & 15 & 8 & 1 & 0 \\
 OGR (extended random basis) & 939 & 923 & 884 & 735 & 627 & 541 \\
 \hline
\end{tabular}
\caption{Numbers of runs (over 1000) that converged to the true solution $\al^\star$.}
\label{tab:1}
\end{table}

\vspace*{-5mm}
\noindent
These results show clearly the lack of robustness of the controls generated by the GR algorithm:
in only 23 cases over the 1000 runs the minimization converged to the true solution,
and this happens only for the smallest radius $r=0.01$ of the hypercube. 

Next, to test the effect of the chosen basis $\mathcal{B}_\mu$, 
we repeat the same experiment using 6 randomly chosen linearly independent
symmetric matrices $\mu_\ell$, $\ell=1,\dots,6$.
The obtained results of this second test are shown in the second row of Table \ref{tab:1}.
These are clearly better, but still very unsatisfactory.

Finally, we repeat the experiment using the OGR Algorithm \ref{alg: greedy improvement Hamilt}
with a set of 12 matrices, namely the 6 unit basis elements shown above and the 6 linearly
independent random matrices chosen for the second experiment.
We obtain the results shown in the third row of Table \ref{tab:1}. 
These are now very satisfactory. Even though, the number of times that the optimization algorithm 
converged to the true solution decays as the radius $r$ increases, in the worst case for $r=1.00$
more than 500 of runs converged to $\al^\star$. These results show clearly the efficiency
of the new proposed OGR algorithm.

\section{Conclusions}\label{sec:conclusions}

In this work we provided a novel and detailed convergence analysis for
the greedy reconstruction algorithm introduced in \cite{madaysalomon} for Hamiltonian 
reconstruction problems in the field of quantum mechanics. 
The presented convergence analysis has considered linear-quadratic (optimization, least-squares) 
problems and revealed the strong dependence of the performance of the greedy reconstruction algorithm 
on the observability properties of the system and on the ansatz of the basis elements used to reconstruct 
the unknown operator. This allowed us to introduce a precise (and in some sense optimal) choice of the basis elements for the linear case and led to the introduction of an optimized greedy reconstruction algorithm
applicable also to the nonlinear Hamiltonian reconstruction problem.
Numerical experiments demonstrated the efficiency of the new proposed numerical algorithm.

\vspace*{-2mm}
\bibliographystyle{abbrv}%siamplain}
\bibliography{references}

\end{document}